\theoremstyle{plain}
\newtheorem{thm}{Theorem}[section]
\newtheorem{cor}[thm]{Corollary}
\newtheorem{lem}[thm]{Lemma}
\newtheorem{rem}[thm]{Remark}
\newtheorem*{ex*}{Example}
\newcommand{\N}{\mathbb{N}}
\newcommand{\R}{\mathbb{R}}
\def\1{\boldsymbol{1}}
\def\8{\infty}
\def\8{\infty}
\def\a{\alpha}
\def\b{\beta}
\def\t{\theta}
\def\vp{\varphi}
\def\g{\gamma}
\def\eL{L_{\mu}}
\def\W{dW_\mu}
\def\pimu{d\Pi_{\mu - 1/2}(u)}
\def\piku{d\Pi_{\kappa - \mathbf{1}/2}(u)}
\def\distB{d_{B}}
\DeclareMathOperator{\dist}{dist}
\DeclareMathOperator{\diam}{diam}
\title[Genuinely sharp heat kernel estimates]
	{Genuinely sharp heat kernel estimates \\
	on compact rank-one symmetric spaces, \\
	for Jacobi expansions, \\ 
	on a ball and on a simplex}
\author[A. Nowak]{Adam Nowak}
\address{Adam Nowak, \newline
			Institute of Mathematics,
		Polish Academy of Sciences, \newline
      \'Sniadeckich 8,
      00--656 Warszawa, Poland    
      }
\email{anowak@impan.pl}
\author[P. Sj\"ogren]{Peter Sj\"ogren}
\address{Peter Sj\"ogren, \newline
			Mathematical Sciences, University of Gothenburg \newline
Mathematical Sciences, Chalmers University of Technology \newline
SE-412 96 G\"oteborg, Sweden 
      }
\email{peters@chalmers.se}
\author[T.Z. Szarek]{Tomasz Z. Szarek}
\address{Tomasz Z. Szarek,     \newline
Rutgers University,
Department of Mathematics, \newline
110  Frelinghuysen  Road, Piscataway, NJ 08854, USA \newline
\indent and \newline
University of Wroc\l aw, 
Mathematical Institute, \newline
Plac Grunwaldzki 2/4,
 50--384 Wroc\l aw, Poland
}
\email{tzs7@math.rutgers.edu}
\begin{document}

\begin{abstract}
We prove genuinely sharp two-sided global estimates for heat kernels on all compact rank-one symmetric spaces.
This generalizes the authors' recent result obtained for a Euclidean sphere of arbitrary dimension.
Furthermore, similar heat kernel bounds are shown in the context of classical Jacobi expansions,
on a ball and on a simplex. These results are more precise than the qualitatively sharp Gaussian estimates proved recently
by several authors.
\end{abstract}

\maketitle
\thispagestyle{empty}

\footnotetext{
\emph{\noindent 2010 Mathematics Subject Classification:} primary 35K08; secondary 58J35, 42C10.\\
%35K08 - Heat kernel
%60J65 - Brownian motion
%58J65 - Diffusion processes and stochastic analysis on manifolds
%58J35 - Heat and other parabolic equation methods
%42C10 - Fourier series in special orthogonal functions (Legendre polynomials, Walsh functions, etc.)
\emph{Key words and phrases:} 
compact symmetric space, two-point homogeneous space, projective space, heat kernel, sharp estimate,
Jacobi heat kernel, ball, simplex.

Research supported by the National Science Centre of Poland within the project OPUS 2017/27/B/ST1/01623.
The third author was supported also by the Foundation for Polish Science via the START Scholarship.
}

%%%%%%%%%%%%%%%%%%%%%%%%%%%%%%%%%%%%%%%%%%%%%%%%%%%%%%%%%%%%%%%%%%%%%%%
\section{Description of results} \label{sec:desc}
%%%%%%%%%%%%%%%%%%%%%%%%%%%%%%%%%%%%%%%%%%%%%%%%%%%%%%%%%%%%%%%%%%%%%%%

Let $\mathbb{M}$ be a compact Riemannian symmetric space of rank one
or, which is the same (cf.\ \cite{He0}), a compact two-point homogeneous space.
Such spaces were completely classified by Wang \cite{Wa},
who showed that $\mathbb{M}$ is a Euclidean sphere or one of the projective spaces:
over real numbers or complex numbers or quaternions;
exceptionally $\mathbb{M}$ can be the Cayley projective plane over octonions.
The projective spaces are, in a sense, quite similar to the spheres. They have the property that
there exists a group of isometries acting transitively on pairs of equidistant points.
The theory and the geometry of these spaces are well known and understood, see e.g.\ Helgason's books \cite{He0,He1},
Gangolli \cite{Ga} and Sherman \cite{Sch}.
In fact, they can be regarded as model compact spaces of Riemannian geometry \cite{Ch}, which makes them significant
objects not only in mathematics but also in mathematical physics \cite{Sh}.
The associated analysis has been widely developed over many decades, and this is still an active area.
See for instance \cite{AzBa,BGM,CRS1,CRS2} and references therein for numerous examples of recent developments.

Let $K_t^{\mathbb{M}}(x,y)$ be the heat kernel associated to $\mathbb{M}$.
This kernel depends on $x$ and $y$ only through the geodesic distance $\dist(x,y)$.
Our main result Theorem \ref{thm:main} is the genuinely sharp bound
\begin{equation} \label{mbnd}
K_t^{\mathbb{M}}(x,y)\simeq \big[t+\diam\mathbb{M}-\dist(x,y)\big]^{-(d-\tilde{d}-1)/2}
	\frac{1}{t^{d/2}} \exp\bigg(-\frac{\dist^2(x,y)}{4t}\bigg),
\end{equation}
where $d$ is the dimension of $\mathbb{M}$ over the reals, and $\tilde{d}$ is the so-called antipodal dimension of $\mathbb{M}$
(see Section \ref{sec:sym}).
This bound holds uniformly in $x,y \in \mathbb{M}$ and $0 < t \le T$, with an arbitrary fixed $T<\infty$.
On the other hand, for $t \ge T$ one has $K_t^{\mathbb{M}}(x,y)\simeq 1$, which is a well-known fact.

In the simplest case, when $\mathbb{M}$ is a Euclidean sphere of arbitrary dimension, \eqref{mbnd}
has been proved only recently, by the authors \cite{NoSjSz}. For the remaining $\mathbb{M}$, 
our result improves significantly the best previously known global Gaussian bounds
$$
\frac{1}{t^{d/2}} \exp\bigg(-\frac{\dist^2(x,y)}{(4-\varepsilon)t}\bigg) 
\lesssim K_t^{\mathbb{M}}(x,y) \lesssim
\frac{1}{t^{d/2}} \exp\bigg(-\frac{\dist^2(x,y)}{(4+\varepsilon)t}\bigg),
\qquad 0 < t \le T,
$$
(here $\varepsilon > 0$ is arbitrary and fixed), which are only qualitatively sharp, in the sense that the constants appearing in the
exponential factors are different from each other and from $4$. These bounds are due to Li and Yau \cite{LY},
see \cite[Theorems 5.5.6 and 5.6.3, Note 5.6.4]{Da} or \cite[Chapter XII, Section 12]{Ch1}.
This is why we call our estimates genuinely sharp.

The heart of the problem here in passing from qualitatively to genuinely sharp estimates is to determine the relevant
polynomial factor, since the common exponential constant in \eqref{mbnd} is clear due to an asymptotic result of
Varadhan \cite{Va}, see e.g.\ \cite[Corollary 5.6.5]{Da}. This task is by no means trivial, as can be seen already from
the spherical heat kernel case. In fact, it is the geometry of $\mathbb{M}$ that has a decisive significance, and to handle
it one needs some advanced tools.

It is interesting to note that the proof of \eqref{mbnd} also gives genuinely sharp bounds for the derivative of $K_t^{\mathbb{M}}$
as a function of the geodesic distance, see Corollary \ref{cor:main}. In particular, this verifies the natural
conjecture that $K_t^{\mathbb{M}}(x,y)$ is a strictly decreasing function of $\dist(x,y)$. 
This monotonicity of the heat kernel was proved for several spaces invariant under rotation in \cite{ACMM}, and our result
enriches
the list of spaces treated there with the projective spaces mentioned.

Our proof of \eqref{mbnd} relies on reducing the problem, via a suitable addition formula for the eigenfunctions
of the Laplace-Beltrami operator on $\mathbb{M}$, to showing genuinely sharp bounds for the heat kernel related to
orthogonal expansions in Jacobi polynomials $P_n^{\a,\b}$, for certain discrete values of the parameters $\a,\b$.
This led us to proving genuinely sharp estimates for the Jacobi heat kernel 
for all $\a,\b \ge -1/2$; see Theorem \ref{thm:jac3}
which is no doubt of independent interest. For these $\a,\b$, it improves the qualitatively sharp
estimates obtained independently in \cite{CKP} and \cite{NoSj}, and for general $\a,\b > -1$ it obviously leads to
a conjecture about the optimal bounds.

Other classical frameworks intimately connected with Jacobi expansions are those of classical orthogonal
expansions in a Euclidean ball and on a simplex, cf.\ \cite{DaiXu,DuXu}. We take this opportunity to show
genuinely sharp heat kernel bounds in these settings for many values of the parameters involved;
see Theorems \ref{thm:heatball} and \ref{thm:heatsim}. All this
refines the very recent qualitatively sharp estimates by Kerkyacharian, Petrushev and Xu \cite{KPX1,KPX2}, and by two of the
authors \cite{SjSz}. Moreover, this hints how the optimal bounds for general parameters should look like.

Compared with qualitatively sharp estimates, genuinely sharp heat kernel bounds are in general harder to prove and
appear rarely in the literature; the case of the hyperbolic space \cite{DaMa} is one of these sparse instances.
The example of the Euclidean sphere \cite{NoSjSz}, as well as the present paper, show that this is a difficult
problem even for very simple Riemannian manifolds. In this connection, it is perhaps worth mentioning the recent papers
\cite{BM1,BM2,MS,MSZ} where such results were obtained for Dirichlet heat kernels related to Bessel operators in half-lines,
the Dirichlet heat kernel in Euclidean balls of arbitrary dimension, and the Fourier-Bessel heat kernel on the interval $(0,1)$.
This was achieved by a clever combination of probabilistic and analytic methods.
We note that the ball setting considered in \cite{MS} is not the same as the one in this paper.

The organization of the paper is as follows. In Section \ref{sec:prep} we introduce basic notation and prove the key
technical Lemma \ref{lem:FVII}. The next sections are devoted to estimates of the Jacobi heat kernel
(Section \ref{sec:jac}), the heat kernels on compact rank-one symmetric spaces (Section \ref{sec:sym}), the heat kernel
on the ball (Section \ref{sec:ball}) and the heat kernel on the simplex (Section \ref{sec:simplex}).

%%%%%%%%%%%%%%%%%%%%%%%%%%%%%%%%%%%%%%%%%%%%%%%%%%%%%%%%%%%%%%%%%%%%%%%
\section{Technical preparation} \label{sec:prep}
%%%%%%%%%%%%%%%%%%%%%%%%%%%%%%%%%%%%%%%%%%%%%%%%%%%%%%%%%%%%%%%%%%%%%%%

Throughout the paper we use a standard notation. The minimum and the maximum of two quantities will be indicated by
$\wedge$ and $\vee$, respectively.
Further, we will frequently use the notation $X \lesssim Y$ to indicate that
$X \le C Y$ with a positive constant $C$ independent of significant quantities. We shall write
$X \simeq Y$ when simultaneously $X \lesssim Y$ and $Y \lesssim X$. 

For a parameter $\nu \ge -1/2$, let $\Pi_{\nu}$ be the probability measure in $[-1,1]$ given by
\begin{align} \label{Pimeas}
d\Pi_{\nu}(w) = \frac{\Gamma(\nu+1)}{\sqrt{\pi} \, \Gamma(\nu+1/2)} \big(1-w^2\big)^{\nu-1/2}\, dw, \qquad \nu > -1/2,
\end{align}
and $\Pi_{-1/2} = (\delta_{-1}+\delta_1)/2$, the mean of Dirac deltas.

The main aim of this section is to prove the following technical result.
\begin{lem} \label{lem:FVII}
Let $\nu \ge -1/2$ and $\xi \in \R$ be fixed. Denote $\Phi_{A,B}(w) = \arccos(A+Bw)$. Then
\begin{align*}
& \int_{[0,1]} \big( \pi - \Phi_{A,B}(w) + D \big)^{-\xi} \exp\big(-\Phi^2_{A,B}(w)/D\big) \, d\Pi_{\nu}(w) \\
& \quad \simeq
D^{\nu+1/2} \big( \pi - \Phi_{A,B}(1) + D \big)^{-\xi}
\Big( B \big( \pi - \Phi_{A,B}(1) \big)^{-1} + D \Big)^{-\nu-1/2}
\exp \big(-\Phi^2_{A,B}(1)/D \big),
\end{align*}
uniformly in $0 \le B \le 1$, $-1 \le A \le 1 - B$ and $D > 0$;
here $B \big( \pi - \Phi_{A,B}(1) \big)^{-1}$ is understood as $0$ if $B=0$.
\end{lem}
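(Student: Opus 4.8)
The plan is to analyze the integral by localizing to the region where the exponential factor is largest, namely near $w=1$, which is where $\Phi_{A,B}(w)=\arccos(A+Bw)$ attains its maximum $\Phi_{A,B}(1)=\arccos(A+B)$ on $[0,1]$ (since $\arccos$ is decreasing and $A+Bw$ is increasing in $w$). Write $\vp = \Phi_{A,B}(1)$ for brevity. The key point is that $\pi-\vp$ measures the distance of $A+B$ from $-1$, and near $w=1$ one has the expansion $\Phi_{A,B}(w) \approx \vp - \frac{B}{\sqrt{1-(A+B)^2}}(1-w)$, while $\sqrt{1-(A+B)^2} \simeq \sqrt{(1-(A+B))(1+(A+B))} \simeq \sqrt{\pi-\vp}$ uniformly (using $1+(A+B)\simeq 1$ since $A+B$ stays away from $-1$ only in the relevant regime; one must also treat the complementary regime where $A+B$ is near $-1$ separately). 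Thus the derivative of $\Phi_{A,B}$ at $w=1$ is comparable to $B(\pi-\vp)^{-1/2}$, which explains the appearance of the quantity $B(\pi-\vp)^{-1}$ in the claimed bound after squaring inside the Gaussian.

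First I would handle the case $\nu=-1/2$, where $d\Pi_{-1/2}$ is supported on $\{-1,1\}$, so the left-hand side equals $\frac12(\pi-\Phi_{A,B}(1)+D)^{-\xi}\exp(-\Phi_{A,B}^2(1)/D) + \frac12(\pi-\Phi_{A,B}(-1)+D)^{-\xi}\exp(-\Phi_{A,B}^2(-1)/D)$; since $\Phi_{A,B}(-1)\le\Phi_{A,B}(1)$, the first term dominates in the exponential, and a short argument comparing the polynomial prefactors (using $\pi-\Phi_{A,B}(\pm1) \le \pi$ and the Gaussian decay to absorb the discrepancy when $\Phi_{A,B}(1)-\Phi_{A,B}(-1)$ is large) gives the stated estimate, noting $D^{\nu+1/2}=D^0=1$ and the extra factor $(B(\pi-\vp)^{-1}+D)^{0}=1$. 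For $\nu>-1/2$, I would substitute $w=1-s$ and split the integral at the scale where $\Phi_{A,B}$ has moved by order $\sqrt D$ from its maximum, i.e.\ at $s_0 \simeq \sqrt D \cdot \bigl(B(\pi-\vp)^{-1/2}\bigr)^{-1} \wedge 1$. On the near part $s<s_0$ one replaces $\exp(-\Phi_{A,B}^2/D)$ and $(\pi-\Phi_{A,B}+D)^{-\xi}$ by their values at $s=0$ up to constants, and the remaining factor $(1-w^2)^{\nu-1/2}=(s(2-s))^{\nu-1/2}\simeq s^{\nu-1/2}$ integrates to $s_0^{\nu+1/2}$, which after inserting $s_0\simeq \sqrt D\,(B(\pi-\vp)^{-1}+D/(\pi-\vp))^{-1/2}$ — using $B(\pi-\vp)^{-1/2}+\sqrt D \simeq (B(\pi-\vp)^{-1}+D)^{1/2}(\pi-\vp)^{1/2}$ type manipulations — yields exactly $D^{\nu+1/2}(B(\pi-\vp)^{-1}+D)^{-\nu-1/2}$ times the value at $w=1$. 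On the far part $s>s_0$, the Gaussian has decayed below $\exp(-\Phi_{A,B}^2(1)/D)$ by a factor going to zero super-polynomially in $(\text{displacement}/\sqrt D)$, and a standard estimate bounding the tail integral against the near-part contribution finishes the upper bound; the lower bound follows just from the near part, by restricting to $s\in(s_0/2,s_0)$.

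The main obstacle I anticipate is uniformity of all the comparisons across the full parameter range $0\le B\le 1$, $-1\le A\le 1-B$, $D>0$ — in particular, dealing cleanly with the degenerate subcases: (i) $B$ small or zero, where $\Phi_{A,B}$ is nearly constant and the integral is essentially $(\pi-\vp+D)^{-\xi}\exp(-\vp^2/D)$ with the prefactor $(B(\pi-\vp)^{-1}+D)^{-\nu-1/2}\simeq D^{-\nu-1/2}$ cancelling the $D^{\nu+1/2}$; (ii) $A+B$ close to $-1$, i.e.\ $\pi-\vp$ small, where the linearization of $\arccos$ near $-1$ behaves like a square root and the curvature matters — here one should use $\Phi_{A,B}(w)\simeq \pi - c\sqrt{1+A+Bw}$ and check the resulting integral directly; and (iii) $D$ large, where the exponential is $\simeq 1$ everywhere and one just needs $\int_{[0,1]}(\pi-\Phi_{A,B}(w))^{-\xi}d\Pi_\nu(w) \simeq (\pi-\vp+D)^{-\xi}$, which again reduces to controlling $\Pi_\nu$-mass of level sets of $\Phi_{A,B}$. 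Carefully organizing these regimes so that a single splitting argument covers them — or else enumerating them with a uniform constant — is the technical heart of the lemma; once done, the integral estimate in each regime is routine via the substitution and the Gaussian tail bound described above.
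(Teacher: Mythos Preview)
Your localization strategy---concentrate near $w=1$ where the Gaussian is largest, then split into a near part and a tail---is the right instinct, but the execution contains a genuine error that prevents the argument from closing. You assert $\sqrt{1-(A+B)^2}\simeq\sqrt{\pi-\varphi}$ and hence $|\Phi_{A,B}'(1)|\simeq B(\pi-\varphi)^{-1/2}$. This is wrong: with $\varphi=\arccos(A+B)$ one has $1-(A+B)=2\sin^2(\varphi/2)\simeq\varphi^2$ and $1+(A+B)=2\cos^2(\varphi/2)\simeq(\pi-\varphi)^2$, so $\sqrt{1-(A+B)^2}=\sin\varphi\simeq\varphi(\pi-\varphi)$ and $|\Phi_{A,B}'(1)|=B/\sin\varphi\simeq B/\bigl(\varphi(\pi-\varphi)\bigr)$. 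Your scale $s_0$ and the claimed matching with $D^{\nu+1/2}\bigl(B(\pi-\varphi)^{-1}+D\bigr)^{-\nu-1/2}$ are built on the incorrect rate, so the bookkeeping cannot close as written. The factor $B(\pi-\varphi)^{-1}$ in the target does not come from $\Phi'$ alone; it comes from the displacement of $\Phi^2$, namely $\Phi^2_{A,B}(1-s)-\varphi^2\simeq Bs/(\pi-\varphi)$, and establishing this uniformly (including the regime $\varphi\to 0$) requires exactly the kind of two-sided trigonometric control you are missing. There are also minor slips: $\Phi_{A,B}$ attains its \emph{minimum} at $w=1$, not its maximum; and for $\nu=-1/2$ the integral is over $[0,1]$, so only the atom at $w=1$ contributes and no comparison with the $w=-1$ term is needed.

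The paper sidesteps all of this by changing variable to the angle $\psi=\Phi_{A,B}(w)$ rather than to $s=1-w$. With $\varphi_1=\arccos(A+B)$ and $\varphi_0=\arccos A$, the integral becomes (up to a factor $B^{-\nu-1/2}$)
\[
\int_{\varphi_1}^{\varphi_0}(\pi-\psi+D)^{-\xi}\,e^{-\psi^2/D}\,\sin\psi\,(\cos\varphi_1-\cos\psi)^{\nu-1/2}\,d\psi,
\]
and the uniform estimates $\sin\psi\simeq\psi(\pi-\psi)$ and $\cos\varphi_1-\cos\psi\simeq(\psi-\varphi_1)\,\psi\,(\pi-\varphi_1)$ reduce matters, after rescaling $\psi\mapsto\psi\sqrt{D}$, to the elementary lemma
\[
\int_a^b e^{-x^2}(x-a)^\gamma x^{\gamma+1}\,dx\;\simeq\;\Bigl(\tfrac{(b-a)b}{(b-a)b+1}\Bigr)^{\gamma+1}e^{-a^2},\qquad 0\le a\le b.
\]
This delivers the main term uniformly across all the degenerate regimes you flag as obstacles; a single secondary split at $\widetilde\varphi=(\pi+\varphi_1)/2$, with a short two-case analysis ($\pi-\varphi_1\le D$ versus $\pi-\varphi_1>D$), then controls the residual contribution from the factor $\pi-\psi$. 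The missing idea in your sketch is precisely this angular substitution: it absorbs the curvature of $\arccos$ into the Jacobian, where it is handled once and for all by trigonometric identities, instead of forcing a case enumeration of linearization regimes in the $w$-variable.
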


To prove Lemma \ref{lem:FVII}, we need to estimate first a more elementary integral.
\begin{lem} \label{lem:F7}
Let $\g > -1$ be fixed. Then
\begin{align*}
\int_a^b e^{-x^2} (x-a)^{\g} x^{\g + 1} \, dx
\simeq
\bigg( \frac{(b-a)b}{(b-a)b + 1} \bigg)^{\g + 1} e^{-a^2}, 
\qquad 0 \le a \le b.
\end{align*}
\end{lem}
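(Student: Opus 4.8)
The plan is to normalize the integral via the substitution $x=a+s$, which factors out the Gaussian at the left endpoint:
\[
\int_a^b e^{-x^2}(x-a)^\gamma x^{\gamma+1}\,dx = e^{-a^2}\int_0^{L} e^{-2as-s^2}\,s^\gamma(a+s)^{\gamma+1}\,ds,\qquad L:=b-a.
\]
Since $(b-a)b=L(a+L)$, it then suffices to show $\int_0^{L} e^{-2as-s^2}s^\gamma(a+s)^{\gamma+1}\,ds\simeq\big(L(a+L)\big)^{\gamma+1}\big(L(a+L)+1\big)^{-(\gamma+1)}$ uniformly in $a,L\ge 0$. The weight $e^{-2as-s^2}$ changes regime at the positive root $s_0=\sqrt{a^2+1}-a=(a+\sqrt{a^2+1})^{-1}$ of $2as+s^2=1$; one checks that $s_0\simeq(1+a)^{-1}$, that $s_0(a+s_0)=1-as_0\in[1/2,1]$, and, since $s\mapsto s^2+as$ is increasing, that $L\le s_0$ forces $L(a+L)\le 1$ whereas $L>s_0$ forces $L(a+L)\ge 1/2$. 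The dichotomy $L\le s_0$ versus $L>s_0$ will organize the argument.

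A preliminary step is the purely polynomial estimate
\[
J(M):=\int_0^M s^\gamma(a+s)^{\gamma+1}\,ds\simeq\big(M(a+M)\big)^{\gamma+1},\qquad a,M\ge 0,
\]
whose upper bound uses $(a+s)^{\gamma+1}\le(a+M)^{\gamma+1}$ on $[0,M]$ together with $\int_0^M s^\gamma\,ds\simeq M^{\gamma+1}$ (valid since $\gamma+1>0$), and whose lower bound follows by restricting the integration to $[M/2,M]$, where $s^\gamma\simeq M^\gamma$ and $(a+s)^{\gamma+1}\simeq(a+M)^{\gamma+1}$.

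In the first case $L\le s_0$, the weight obeys $e^{-2as-s^2}\in[e^{-1},1]$ throughout $[0,L]\subseteq[0,s_0]$, so the integral is comparable to $J(L)\simeq\big(L(a+L)\big)^{\gamma+1}$; and since $L(a+L)\le 1$ here, the right-hand side of the desired equivalence is itself $\simeq\big(L(a+L)\big)^{\gamma+1}$, which settles this case. In the second case $L>s_0$, I claim the integral is $\simeq 1$. For the lower bound, discard the interval $[s_0,L]$ and argue on $[0,s_0]$ as before to get a bound $\gtrsim e^{-1}J(s_0)\simeq\big(s_0(a+s_0)\big)^{\gamma+1}\simeq 1$. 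For the upper bound, extend the integration to $[0,\infty)$, use $(a+s)^{\gamma+1}\lesssim a^{\gamma+1}+s^{\gamma+1}$, and estimate the two resulting integrals separately by $a^{\gamma+1}\int_0^\infty e^{-2as}s^\gamma\,ds=2^{-(\gamma+1)}\Gamma(\gamma+1)$ (for $a>0$; this term vanishes when $a=0$) and $\int_0^\infty e^{-s^2}s^{2\gamma+1}\,ds<\infty$, both finite and independent of $a$ and $L$. Since $L(a+L)\ge 1/2$ in this case, the right-hand side is also $\simeq 1$. Reinserting the factor $e^{-a^2}$ finishes the proof.

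The only place demanding real care is the uniformity in $a$ of the upper bound when $L>s_0$: the contribution of the range $s<a$, where $(a+s)^{\gamma+1}\simeq a^{\gamma+1}$ and the weight is essentially $e^{-2as}$, must be bounded by a Gamma-function constant not depending on $a$, which becomes transparent after the rescaling $t=2as$; the contribution of $s>a$ collapses to a fixed convergent Gaussian integral. Everything else amounts to elementary dissections of the interval of integration.
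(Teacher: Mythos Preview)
Your argument is correct. The substitution $x=a+s$ followed by the dichotomy $L\le s_0$ versus $L>s_0$ works, and your auxiliary estimate $J(M)\simeq (M(a+M))^{\gamma+1}$ together with the splitting $(a+s)^{\gamma+1}\lesssim a^{\gamma+1}+s^{\gamma+1}$ handles the upper bound cleanly and uniformly in $a$.

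The paper's proof takes a different and considerably shorter route: it uses the change of variable $y=(x-a)(x+a)=x^2-a^2$. The point is that for $x\in[a,b]$ one has $x\simeq x+a$, so $(x-a)^\gamma x^{\gamma+1}\,dx\simeq (x-a)^\gamma(x+a)^\gamma\cdot x\,dx=\tfrac12 y^\gamma\,dy$, and $e^{-x^2}=e^{-a^2}e^{-y}$. The integral thus collapses directly to $e^{-a^2}\int_0^{(b-a)(b+a)} e^{-y}y^\gamma\,dy$, which is $\simeq e^{-a^2}\big((b-a)(b+a)\wedge 1\big)^{\gamma+1}$ by the standard incomplete gamma estimate, and then $(b-a)(b+a)\simeq (b-a)b$. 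This is a one-line reduction, whereas your approach requires a case split, an auxiliary polynomial estimate, and separate treatment of the upper bound via the decomposition of $(a+s)^{\gamma+1}$. Your route is entirely self-contained and makes the role of the scale $s_0\simeq(1+a)^{-1}$ explicit, which is instructive; the paper's substitution hides all of this in the quadratic change of variable and is the more efficient choice.
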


\begin{proof}
Denote the left-hand side in question by $I$. Changing the variable by $y = (x-a)(x + a)$, we get
\begin{align*}
I \simeq 
e^{-a^2} \int_0^{(b-a) (b+a)} e^{-y} y^{\g} \, dy
\simeq
e^{-a^2} \big[ (b-a) (b+a) \wedge 1 \big]^{\g + 1}
\simeq
e^{-a^2} \big[ (b-a) b \wedge 1 \big]^{\g + 1},
\end{align*}
since $x \simeq x+a$ uniformly in $x\in [a,b]$.
The conclusion follows.
\end{proof}

\begin{proof}[{Proof of Lemma \ref{lem:FVII}}]
The cases when $\nu = -1/2$ or $B=0$ are trivial, so we assume that $\nu > -1/2$ and $B > 0$.
Denote the integral in the statement by $I$ and set
\begin{align*}
\vp_0 = \Phi_{A,B}(0) \qquad \textrm{and} \qquad \vp_1 = \Phi_{A,B}(1),
\end{align*}
so that $0\le \vp_1 < \vp_0 \le \pi$. Then $A=\cos\vp_0$ and $B=\cos\vp_1-\cos\vp_0$.
Changing the variable by $A + Bw = \cos \psi$, we get
\begin{align*}
I \simeq 
B^{-\nu-1/2}
\int_{\vp_1}^{\vp_0} ( \pi - \psi + D )^{-\xi} \exp\big(-\psi^2/D\big)  
\sin \psi \, \big(\cos \vp_1 - \cos \psi \big)^{\nu - 1/2} \, d\psi.
\end{align*}
Using basic trigonometric identities, we see that 
$\cos \vp_1 - \cos \psi \simeq (\psi - \vp_1) \psi (\pi - \vp_1 )$ and
$\sin \psi \simeq \psi (\pi - \psi)$ for $\psi \in (\vp_1, \vp_0 )$.
This together with the change of variable $\psi \mapsto \psi \sqrt{D}$ gives us
$$
I \simeq B^{-\nu-1/2} (\pi - \vp_1)^{\nu - 1/2} D^{\nu+1/2 + (1-\xi)/2} J,
$$
where
$$
J=
\int_{\vp_1/\sqrt{D}}^{\vp_0/\sqrt{D}} 
\big( \pi/\sqrt{D} - \psi + \sqrt{D} \big)^{-\xi}
\big( \psi - \vp_1/\sqrt{D} \big)^{\nu - 1/2} \psi^{\nu+1/2}
\big( \pi/\sqrt{D} - \psi \big) \exp\big(-\psi^2\big) \, d\psi.
$$
Let
$$
Q =
D^{ (\xi - 1)/2} (\pi - \vp_1 + D)^{- \xi} (\pi - \vp_1)
\bigg( \frac{ (\vp_0 - \vp_1 ) \vp_0 }{ (\vp_0 - \vp_1 ) \vp_0 + D} \bigg)^{\nu+1/2}
\exp\big(-\vp_1^2/D\big).
$$
Using the fact that $ B = \cos \vp_1 - \cos \vp_0 \simeq (\vp_0 - \vp_1 ) \vp_0 (\pi - \vp_1 )$,
we see that in order to finish the proof of Lemma~\ref{lem:FVII} it suffices to show that $J \simeq Q$.

To proceed, we define $\widetilde{\vp} = (\pi + \vp_1)/2 \in [\pi/2,\pi)$ and we split the region of integration in $J$ into 
$(\vp_1/\sqrt{D}, (\widetilde{\vp} \wedge \vp_0)/\sqrt{D})$ and 
$( (\widetilde{\vp} \wedge \vp_0)/\sqrt{D}, \vp_0/\sqrt{D} )$, denoting the corresponding integrals by $J_1$ and $J_2$, respectively. 
Note that $J_2$ vanishes in some cases.

We first treat $J_1$. Observe that for $\psi \in (\vp_1/\sqrt{D} , \widetilde{\vp}/\sqrt{D})$ we have 
\begin{align*}
(\pi - \vp_1)/(2\sqrt{D})
< 
\pi/\sqrt{D} - \psi 
< 
(\pi - \vp_1)/\sqrt{D}.
\end{align*}
Using this and then Lemma~\ref{lem:F7} specified to $\g = \nu - 1/2$, 
$a = \vp_1/\sqrt{D}$ and $b = (\widetilde{\vp} \wedge \vp_0)/\sqrt{D}$, together with the relations
\begin{align*}
\widetilde{\vp} \wedge \vp_0 - \vp_1
=
(\widetilde{\vp} - \vp_1) \wedge (\vp_0 - \vp_1)
=
(\pi-\vp_1)/2 \wedge (\vp_0 - \vp_1)
\simeq
\vp_0 - \vp_1, \qquad
\widetilde{\vp} \wedge \vp_0
\simeq
\vp_0,
\end{align*}
we get
\begin{align*}
J_1 
& \simeq
D^{ (\xi - 1)/2} (\pi - \vp_1 + D)^{- \xi} (\pi - \vp_1)
\int_{\vp_1/\sqrt{D}}^{(\widetilde{\vp} \wedge \vp_0)/\sqrt{D}} 
\big( \psi - \vp_1/\sqrt{D} \big)^{\nu - 1/2} \psi^{\nu+1/2}
\exp\big(-\psi^2\big) \, d\psi \\
& \simeq
D^{ (\xi - 1)/2} (\pi - \vp_1 + D)^{- \xi} (\pi - \vp_1)
\bigg( \frac{ (\vp_0 - \vp_1 ) \vp_0 }{ (\vp_0 - \vp_1 ) \vp_0 + D} \bigg)^{\nu+1/2}
\exp\big(-\vp_1^2/D\big) = Q.
\end{align*}

Therefore we have shown that $J_1 \simeq Q$, and in order to finish the proof of Lemma~\ref{lem:FVII}
it is enough to verify that $J_2 \lesssim Q$. To do so, it is convenient to distinguish two cases.

\noindent \textbf{Case 1:} $\pi - \vp_1 \le D$. Here we have
\begin{align*}
\pi/\sqrt{D} - \psi < (\pi - \vp_1)/\sqrt{D} \le \sqrt{D}, \qquad
\psi \in \big( \vp_1/\sqrt{D}, \vp_0/\sqrt{D} \big),
\end{align*}
and consequently 
\begin{align*}
J_2 \le J \lesssim
D^{ -(\xi + 1)/2} (\pi - \vp_1) 
\int_{\vp_1/\sqrt{D}}^{ \vp_0/\sqrt{D}} 
\big( \psi - \vp_1/\sqrt{D} \big)^{\nu - 1/2} \psi^{\nu+1/2}
\exp\big(-\psi^2\big) \, d\psi. 
\end{align*}
Applying now Lemma~\ref{lem:F7} with $\g = \nu - 1/2$, 
$a = \vp_1/\sqrt{D}$ and $b = \vp_0/\sqrt{D}$, we obtain 
\begin{align*}
J_2 \lesssim
D^{ -(\xi + 1)/2} (\pi - \vp_1) 
\bigg( \frac{ (\vp_0 - \vp_1 ) \vp_0 }{ (\vp_0 - \vp_1 ) \vp_0 + D} \bigg)^{\nu+1/2}
\exp\big(-\vp_1^2/D\big) \simeq Q.
\end{align*}
This finishes Case 1.

\noindent \textbf{Case 2:} $\pi - \vp_1 > D$. Here we may assume that 
$\widetilde{\vp} < \vp_0$ since otherwise $J_2=0$. This means that 
$0 \le \vp_1 < \widetilde{\vp} < \vp_0 \le \pi$, and we also have $\widetilde{\vp} \ge \pi/2$.
Further, observe that the condition $\pi - \vp_1 > D$ forces
\begin{align*}
\vp_0 - \vp_1 > \widetilde{\vp} - \vp_1 = (\pi - \vp_1)/2 > D/2.
\end{align*}
This together with the fact that $\pi/2 < \vp_0 \le \pi$ implies that
\begin{align} \label{estQ1}
Q \simeq D^{ (\xi - 1)/2} (\pi - \vp_1)^{1 - \xi}
\exp\big(-\vp_1^2/D\big).
\end{align}
Further, observe that for 
$\psi \in \big( \widetilde{\vp}/\sqrt{D}, \vp_0/\sqrt{D} \big)$ we have
\begin{align*}
& (\pi - \vp_1)/(2\sqrt{D})
 < 
\psi - \vp_1/\sqrt{D}
< 
(\vp_0 - \vp_1)/\sqrt{D}
< 
(\pi - \vp_1)/\sqrt{D}, \\ 
& \pi/\sqrt{D} - \psi 
< 
(\pi - \vp_1)/(2\sqrt{D}), \qquad
\psi \simeq 1/\sqrt{D}.
\end{align*}
Using these relations and then changing the variable 
$\pi/\sqrt{D} - \psi + \sqrt{D} \mapsto s$, we arrive at
\begin{align*}
J_2 &\lesssim
\Big( \frac{ \pi - \vp_1  }{ D} \Big)^{\nu + 1/2}
\exp\bigg(-\frac{\widetilde{\vp}^{\,2}}{D}\bigg)
\int_{\widetilde{\vp}/\sqrt{D}}^{ \vp_0/\sqrt{D}} 
\big( \pi/\sqrt{D} - \psi + \sqrt{D} \big)^{-\xi} \, d\psi \\
& \le 
\Big( \frac{ \pi - \vp_1  }{ D} \Big)^{\nu + 1/2}
\exp\bigg(-\frac{\widetilde{\vp}^{\,2}}{D}\bigg)
\int_{\sqrt{D}}^{ (\pi - \vp_1)/(2\sqrt{D}) + \sqrt{D} } 
s^{-\xi} \, ds.
\end{align*}
Combining this estimate with \eqref{estQ1} and the relations
\begin{align*}
\widetilde{\vp}^{\,2} - \vp_1^2 
= (\widetilde{\vp} - \vp_1)(\widetilde{\vp} + \vp_1)
\ge 
\pi (\pi - \vp_1)/4
\ge 
(\pi - \vp_1)/2,
\end{align*}
we see that in order to prove that $J_2 \lesssim Q$ it is enough to show that 
\begin{align*}
\Big( \frac{ \pi - \vp_1  }{ D} \Big)^{\nu + 1/2}
\Big( \frac{ \pi - \vp_1  }{ \sqrt{D} } \Big)^{\xi - 1}
\exp\Big(-\frac{\pi - \vp_1}{2D} \Big)
\int_{\sqrt{D}}^{ (\pi - \vp_1)/(2\sqrt{D}) + \sqrt{D} } 
s^{-\xi} \, ds
\lesssim 1.
\end{align*}
Evaluating the last integral and using the fact that 
$\big( \frac{ \pi - \vp_1  }{ D} \big)^{\nu + 1/2} \exp\big(-\frac{\pi - \vp_1}{4D} \big) \lesssim 1$,
we can further reduce our task to showing that 
\begin{align*}
\Big( \frac{ \pi - \vp_1  }{ \sqrt{D} } \Big)^{\xi - 1}
\exp\Big(-\frac{\pi - \vp_1}{4D} \Big) 
\left.
\begin{cases}
D^{(1-\xi)/2}, & \xi > 1  \\
\log\left(1 + \frac{\pi-\vp_1}{2D} \right), &  \xi = 1 \\
\big( \frac{ \pi - \vp_1  }{ \sqrt{D} } \big)^{-\xi + 1}, & \xi < 1
\end{cases} \right\}
\lesssim 1.
\end{align*}
This, however, can easily be justified.

Case 2 is finished, and the proof of Lemma~\ref{lem:FVII} is complete.
\end{proof}

%%%%%%%%%%%%%%%%%%%%%%%%%%%%%%%%%%%%%%%%%%%%%%%%%%%%%%%%%%%%%%%%%%%%%%%
\section{Estimates of the Jacobi heat kernel} \label{sec:jac}
%%%%%%%%%%%%%%%%%%%%%%%%%%%%%%%%%%%%%%%%%%%%%%%%%%%%%%%%%%%%%%%%%%%%%%%

Let $\a,\b > -1$.
Recall that the heat kernel related to classical Jacobi expansions is given by (see e.g.\ \cite{NoSj})
\begin{align} \label{iden3}
G_t^{\a,\b}(x,y) = \sum_{n=0}^{\infty} e^{-t\lambda^{\a,\b}_n} 
	\frac{P_n^{\a,\b}(x) P_n^{\a,\b}(y)}{h_n^{\a,\b}},
	\qquad x,y \in [-1,1], \quad t > 0,
\end{align}
where $\lambda^{\a,\b}_n = n(n + \a+\b+1)$ and
$h_n^{\a,\b} = \|P_n^{\a,\b}\|_{2,(\a,\b)}^2$, the norm taken in $L^2$ of the interval $[-1,1]$ with
the measure $d\rho_{\a,\b}(x) = (1-x)^{\a}(1+x)^{\b} dx$.
Here $\{P_n^{\a,\b}:n=0,1,2,\ldots\}$ is the system of Jacobi polynomials (cf.\ \cite{Sz}), which constitutes
an orthogonal basis in $L^2([-1,1],d\rho_{\a,\b})$.

For large $t$, say $t \ge 1$, the Jacobi heat kernel is essentially constant, see \cite{CKP} or \cite{NoSj},
\begin{equation} \label{jac:long}
G_t^{\a,\b}(x,y) \simeq 1, \qquad x,y \in [-1,1], \quad t \ge 1.
\end{equation}
Qualitatively sharp bounds of $G_t^{\a,\b}(x,y)$ for small $t$ were obtained in \cite{CKP},
and by completely different methods in \cite{NoSj} with the restriction $\a,\b \ge -1/2$.
The following genuinely sharp estimate was conjectured in \cite{NoSjSz} for all $\a,\b > -1$:
\begin{align} \nonumber
& G_t^{\a,\b}(\cos\varphi,\cos\psi)  \\ 
& \quad \simeq \big[ t+ \varphi\psi\big]^{-\a-1/2}
	\big[ t + (\pi-\varphi)(\pi-\psi)\big]^{-\b-1/2} \frac{1}{\sqrt{t}} \exp\bigg( -\frac{(\varphi-\psi)^2}{4t}\bigg),
	\qquad 0 < t \le 1,
	\label{jac:gen}
\end{align}
uniformly in $\varphi,\psi \in [0,\pi]$. An important special case of \eqref{jac:gen} is
\begin{equation} \label{jac:spec}
G_t^{\lambda,\lambda}(\cos\varphi,1) \simeq t^{-\lambda-1/2} \big[ t+\pi-\varphi \big]^{-\lambda-1/2}
	\frac{1}{\sqrt{t}} \exp\bigg( -\frac{\varphi^2}{4t}\bigg), \qquad 0 < t \le 1,
\end{equation}
where $\lambda > -1$ and $\varphi$ is as before.
The bound \eqref{jac:spec} was proved in \cite{NoSjSz} for $\lambda \ge -1/2$ half-integer.
Here we will generalize that result, as in \cite{NoSjSz} by means of an analytic approach instead of the abstract
technology employed in \cite{CKP}. Our main tool will be the following reduction formula from \cite[Theorem 3.1]{NoSj}:
For $\a,\b \ge -1/2$,
\begin{align} \nonumber
& G_t^{\a,\b}(\cos\varphi,\cos\psi) \\ & = \mathcal{C}_{\a,\b} \iint G_{t/4}^{\a+\b+1/2,\a+\b+1/2}
	\Big( u \sin\frac{\varphi}2\sin\frac{\psi}2 + v \cos\frac{\varphi}2 \cos\frac{\psi}2,1\Big)\, d\Pi_{\a}(u)d\Pi_{\b}(v),
	\label{red}
\end{align}
with $\mathcal{C}_{\a,\b} = \sqrt{\pi} \, \Gamma(\a+\b+3/2)/(2^{\a+\b+1}\Gamma(\a+1)\Gamma(\b+1))$.
This is a consequence of a product formula due to Dijksma and Koornwinder \cite{DK}.

We now formulate our main result in the Jacobi setting.
\begin{thm} \label{thm:jac3}
The bound \eqref{jac:gen} holds for any pair $\a,\b \ge -1/2$. 
\end{thm}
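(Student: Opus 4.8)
The plan is to deduce the two-sided bound \eqref{jac:gen} for all $\a,\b \ge -1/2$ from the reduction formula \eqref{red} by applying the key technical Lemma \ref{lem:FVII} twice --- once for the integration in $u$ and once for the integration in $v$. First I would invoke the special case \eqref{jac:spec} with $\lambda = \a+\b+1/2 \ge -1/2$, which governs the kernel $G_{t/4}^{\a+\b+1/2,\a+\b+1/2}(\cos\Theta,1)$ appearing under the double integral in \eqref{red}; note that $\a+\b+1/2$ need not be a half-integer, so strictly speaking I must first establish \eqref{jac:spec} for \emph{all} $\lambda \ge -1/2$. This is itself the special case $\a=\b=\lambda$ of what we are proving, but it can be obtained independently: take $\b=-1/2$ in \eqref{red} (so $\Pi_{-1/2}$ is atomic and the $v$-integral collapses to two terms $v=\pm1$), reducing $G_t^{\lambda,-1/2}$ to an integral of $G_{t/4}^{\lambda-1/2+1/2,\lambda}=G^{\lambda,\lambda}_{t/4}(\,\cdot\,,1)$ --- wait, that is circular; instead one should run the scheme bottom-up: \eqref{jac:spec} for $\lambda=-1/2$ is the elementary Chebyshev heat kernel (explicit Jacobi theta/Gaussian sum), and then \eqref{red} with $\a=\lambda,\b=-1/2$ lifts $\lambda=-1/2$ to general $\lambda\ge-1/2$ by a single application of Lemma \ref{lem:FVII}. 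Once \eqref{jac:spec} is known for all $\lambda\ge-1/2$, the general case follows.

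For the main argument, write $\varphi,\psi\in[0,\pi]$ and set $\Theta = \Theta(u,v) = \arccos\!\big(u\,\svp\sin\frac{\psi}{2} + v\,\cvp\cos\frac{\psi}{2}\big)$, so that the integrand in \eqref{red} is, by \eqref{jac:spec} with $\lambda=\a+\b+1/2$ and time $t/4$,
\begin{equation*}
G_{t/4}^{\lambda,\lambda}(\cos\Theta,1) \simeq t^{-\lambda-1/2}\,\big[t+\pi-\Theta\big]^{-\lambda-1/2}\,\frac{1}{\sqrt t}\,\exp\!\Big(-\frac{\Theta^2}{t}\Big).
\end{equation*}
I would first carry out the $v$-integration against $d\Pi_{\b}(v)$ with $u$ frozen. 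For fixed $u$, the quantity $A+Bv$ with $A = u\,\svp\sin\frac{\psi}{2}$, $B = \cvp\cos\frac{\psi}{2}$ satisfies $B\ge0$ and $-1\le A\le 1-B$ provided $u\in[-1,1]$, which matches exactly the hypotheses of Lemma \ref{lem:FVII} with $\nu=\b$, $\xi=\lambda+1/2$, $D\asymp t$ (after absorbing the constant $4$), and $\Phi_{A,B}(v)=\arccos(A+Bv)$; restricting to $v\in[0,1]$ is harmless because $d\Pi_\b$ is symmetric and $\pi-\Phi$ is monotone, so the $v<0$ part is dominated by the $v>0$ part. Lemma \ref{lem:FVII} then produces a clean expression in terms of $\Phi_{A,B}(1) = \arccos\!\big(u\,\svp\sin\frac{\psi}{2}+\cvp\cos\frac{\psi}{2}\big)$. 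The trigonometric identity $u\,\svp\sin\frac{\psi}{2}+\cvp\cos\frac{\psi}{2} = \cos\frac{\varphi-\psi}{2}$ when $u=1$ — and more generally the half-angle bookkeeping — lets one identify $\pi-\Phi_{A,B}(1)$ and the factor $B(\pi-\Phi_{A,B}(1))^{-1}\asymp \cvp\cos\frac{\psi}{2}\cdot\big[(\pi-\varphi)+(\pi-\psi)\big]^{-1}$ in the geometrically meaningful variables. Then I would repeat the same maneuver for the remaining $u$-integral against $d\Pi_{\a}(u)$, now with roles adjusted so that the surviving endpoint $u=1$ gives $\Theta(1,1)$ with $\cos\Theta(1,1)=\cos\frac{\varphi-\psi}{2}$, i.e.\ $\Theta(1,1)=|\varphi-\psi|/2$, whence $\Theta^2(1,1)/t = (\varphi-\psi)^2/(4t)$ --- precisely the Gaussian exponent in \eqref{jac:gen}.

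The bookkeeping of the polynomial prefactors is where I expect the real work to lie. After the first application the factor $[t+\pi-\Theta]^{-\lambda-1/2}$ with $\lambda=\a+\b+1/2$ has to be split, via the output of Lemma \ref{lem:FVII}, into one piece that will be consumed by the second ($u$-)integration and one piece $\asymp [t+(\pi-\varphi)(\pi-\psi)/\big((\pi-\varphi)+(\pi-\psi)\big)]^{-\b-1/2}\cdot(\cdots)$ that must ultimately recombine into the target $[t+(\pi-\varphi)(\pi-\psi)]^{-\b-1/2}$; one has to track carefully how the exponent $\nu+1/2$ in Lemma \ref{lem:FVII} (which will be $\b+1/2$, then $\a+1/2$) interacts with $\xi$ (which is $\lambda+1/2=\a+\b+1$), and check that the elementary identity $\min(X,Y)\asymp XY/(X+Y)$ together with $t+\min\{\cdots\}\asymp \min\{t+\cdots\}$-type manipulations collapse the nested "resistors in parallel'' expressions $t + \frac{(\pi-\varphi)(\pi-\psi)}{\pi-\varphi+\pi-\psi}$, and their analogues at the origin with $\varphi\psi$, exactly into the clean products in \eqref{jac:gen}. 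A secondary technical point is the non-smooth change between the regimes $\Theta<\pi/2$ and $\Theta\ge\pi/2$ implicit in $\arccos$ near the endpoints --- but these are precisely the case distinctions ($\pi-\vp_1\lessgtr D$, $\widetilde\vp\lessgtr\vp_0$) already handled inside the proof of Lemma \ref{lem:FVII}, so no new ideas are needed there; one simply has to be careful that the hypotheses $0\le B\le1$, $-1\le A\le 1-B$ are genuinely met uniformly in $\varphi,\psi\in[0,\pi]$ and in the frozen variable. The lower bound in \eqref{jac:gen} comes for free since \eqref{red} is an exact identity and \eqref{jac:spec} and Lemma \ref{lem:FVII} are themselves two-sided.
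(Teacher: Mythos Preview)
Your main argument---plugging \eqref{jac:spec} into the reduction formula \eqref{red} and applying Lemma~\ref{lem:FVII} twice---is indeed the content of the paper's Lemma~\ref{lem:gen}, and this part is correct. The paper streamlines the bookkeeping you worry about by first restricting to $(u,v)\in[0,1]^2$ (where $\Theta\le\pi/2$, so $t+\pi-\Theta\simeq 1$ and the factor $B/(\pi-\Phi_{A,B}(1))\simeq B$), which lets both applications of Lemma~\ref{lem:FVII} use $\xi=0$ and avoids the ``resistors in parallel'' algebra entirely. Your version with $\xi=\lambda+1/2$ would run into a real obstacle: after the first application the surviving $u$-integrand carries the extra factor $\big(B(\pi-\Phi)^{-1}+t\big)^{-\b-1/2}$, which is not of the form Lemma~\ref{lem:FVII} accepts.

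The genuine gap is your bootstrap for \eqref{jac:spec}. Taking $\a=\lambda$, $\b=-1/2$ in \eqref{red} does \emph{not} lift the Chebyshev case to general $\lambda$: the left-hand side is $G_t^{\lambda,-1/2}$ while the kernel under the integral is $G_{t/4}^{\lambda,\lambda}$, so you are expressing the former in terms of the latter, not the reverse. The reduction formula only moves from an ultraspherical parameter $\lambda=\a+\b+1/2$ to the pair $(\a,\b)$; it gives no mechanism to increase the ultraspherical parameter starting from $\lambda=-1/2$. The paper closes this gap by a different route: it takes as input the half-integer case of \eqref{jac:spec} from \cite{NoSjSz}, uses Lemma~\ref{lem:gen} to get \eqref{jac:gen} whenever $\a+\b$ is a half-integer, then invokes the comparison principle (Lemma~\ref{lem:comp}) to squeeze $G_t^{\lambda,\lambda}(\cos\varphi,1)$ between two kernels with half-integer parameter sum (Lemma~\ref{lem:us}), obtaining \eqref{jac:spec} for all $\lambda>0$ on $[0,\pi/2]$; finally Lemma~\ref{lem:spec} (a downward recursion $\lambda\mapsto\lambda/2-1/4$ coming from \eqref{red} with $\a=\b$) extends this to $[0,\pi]$ and down to $\lambda\ge-1/2$. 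The comparison principle is the ingredient your plan is missing.
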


\begin{rem} \emph{
The estimate \eqref{jac:gen} holds for $\b=1/2$ and all $\a > -1$, as well as for $\a=1/2$ and all $\b > -1$.
This can be deduced from genuinely sharp Fourier-Bessel heat kernel estimates obtained in \cite{MSZ} and
a connection between Jacobi and Fourier-Bessel settings established in \cite{NR}.
The same connection and Theorem \ref{thm:jac3} provide an alternative and fully analytic proof of the genuinely
sharp Fourier-Bessel heat kernel bounds, when the Fourier-Bessel parameter $\nu$ satisfies $\nu \ge -1/2$.}
\end{rem}

To prove Theorem \ref{thm:jac3}, we need several auxiliary results. The first one is a special case of the comparison
principle \cite[Theorem 3.5]{NoSj}, where we take $\epsilon=0$. With this
choice, one can delete the assumption $\a \ge -\epsilon/2$, as can
be seen from the proof of \cite[Theorem 3.5]{NoSj} (this slight enhancement was already observed in \cite[p.\,346]{NSS}).
\begin{lem} \label{lem:comp}
Let $\a,\b > -1$ and assume that $\b \ge -\delta/2$ for some $\delta \ge 0$. Then
$$
\big[ (1+x)(1+y)\big]^{\delta/2} G_t^{\a,\b+\delta}(x,y) \le e^{\delta (\a+\b+1+\delta/2)t/2} G_t^{\a,\b}(x,y),
	\qquad x,y \in [-1,1], \quad t > 0.
$$
\end{lem}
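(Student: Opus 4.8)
The plan is to realize the left-hand side as the heat kernel of a one-dimensional diffusion obtained from the $(\a,\b+\delta)$-Jacobi semigroup by a ground-state-type substitution, and then to dominate that kernel by $G_t^{\a,\b}$ via the nonnegativity of a potential that emerges from the substitution. Write $\mathcal{J}^{\a,\b} = (1-x^2)\,d^2/dx^2 + \big(\b-\a-(\a+\b+2)x\big)\,d/dx$ for the self-adjoint realization on $L^2([-1,1],d\rho_{\a,\b})$ satisfying $\mathcal{J}^{\a,\b}P_n^{\a,\b} = -\lambda_n^{\a,\b}P_n^{\a,\b}$; then $G_t^{\a,\b}$ is the integral kernel of $e^{t\mathcal{J}^{\a,\b}}$ with respect to $d\rho_{\a,\b}$, and this is a symmetric Markov semigroup, so $G_t^{\a,\b}\ge 0$. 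I would introduce the multiplication operator $Uf = (1+x)^{\delta/2}f$. Since $d\rho_{\a,\b+\delta}(x) = (1+x)^{\delta}\,d\rho_{\a,\b}(x)$, the map $U$ is a unitary isomorphism from $L^2(d\rho_{\a,\b+\delta})$ onto $L^2(d\rho_{\a,\b})$, and a short bookkeeping of measures shows that the kernel of $U e^{t\mathcal{J}^{\a,\b+\delta}} U^{-1}$ with respect to $d\rho_{\a,\b}$ is exactly $[(1+x)(1+y)]^{\delta/2} G_t^{\a,\b+\delta}(x,y)$, i.e.\ the left-hand side of the asserted inequality.

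The key computation is to conjugate the generator. A direct differentiation yields the intertwining identity
$$
(1+x)^{\delta/2}\,\mathcal{J}^{\a,\b+\delta}\big[(1+x)^{-\delta/2} g\big] = \mathcal{J}^{\a,\b} g + \tfrac{\delta}{2}\big(\a+\b+1+\tfrac{\delta}{2}\big) g - \frac{\delta(\b+\delta/2)}{1+x}\, g
$$
for smooth $g$, in which the second- and first-order coefficients collapse onto those of $\mathcal{J}^{\a,\b}$, leaving only a constant and a potential. Crucially, the hypothesis $\b \ge -\delta/2$ makes $W(x) := \delta(\b+\delta/2)/(1+x)$ nonnegative on $(-1,1]$. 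Hence $U\mathcal{J}^{\a,\b+\delta}U^{-1} = \mathcal{J}^{\a,\b} + c - W$ with $c = \tfrac{\delta}{2}(\a+\b+1+\delta/2)$, so that $U e^{t\mathcal{J}^{\a,\b+\delta}} U^{-1} = e^{ct}\,e^{t(\mathcal{J}^{\a,\b}-W)}$.

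It then remains to dominate the perturbed semigroup. Since $W\ge 0$ and $e^{t\mathcal{J}^{\a,\b}}$ has a nonnegative kernel, the Feynman--Kac formula writes the kernel of $e^{t(\mathcal{J}^{\a,\b}-W)}$ as $G_t^{\a,\b}(x,y)$ times a Jacobi-bridge expectation of $e^{-\int_0^t W(X_s)\,ds}\in[0,1]$; equivalently, avoiding probability, the Trotter--Kato product formula $e^{t(\mathcal{J}^{\a,\b}-W)} = \lim_n \big(e^{(t/n)\mathcal{J}^{\a,\b}} e^{-(t/n)W}\big)^n$ exhibits each approximant as a composition of the nonnegative kernels $G_{t/n}^{\a,\b}$ with multipliers $e^{-(t/n)W}\in(0,1]$, whose kernel is therefore nonnegative and pointwise bounded by that of $\big(e^{(t/n)\mathcal{J}^{\a,\b}}\big)^n=e^{t\mathcal{J}^{\a,\b}}$. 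In either form one gets $0\le$ kernel of $e^{t(\mathcal{J}^{\a,\b}-W)}\le G_t^{\a,\b}(x,y)$; multiplying by $e^{ct}$ and invoking the identification from the first paragraph gives
$$
[(1+x)(1+y)]^{\delta/2} G_t^{\a,\b+\delta}(x,y) \le e^{\delta(\a+\b+1+\delta/2)t/2}\, G_t^{\a,\b}(x,y),
$$
which is the claim.

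The step I expect to be the main obstacle is the rigorous handling of the endpoint singularity of $W$ at $x=-1$ together with the identification of self-adjoint realizations: one must interpret $\mathcal{J}^{\a,\b}-W$ as a quadratic-form sum, which is legitimate precisely because $W\ge 0$, so the KLMN framework applies and the perturbed form is bounded below by that of $\mathcal{J}^{\a,\b}$, and then verify that the Trotter--Kato product formula for a nonnegative, possibly singular potential converges to this form sum, so that the termwise kernel domination survives in the limit. Everything else --- the unitarity of $U$, the intertwining identity, the sign of the potential, and the appearance of the exact constant $c=\tfrac{\delta}{2}(\a+\b+1+\delta/2)$ --- is routine once this form-theoretic setup is in place.
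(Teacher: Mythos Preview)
Your argument is correct, and it is in substance the same proof that underlies the cited comparison principle \cite[Theorem 3.5]{NoSj}: the paper does not reprove the lemma here but simply specializes that theorem to $\epsilon=0$, and the proof in \cite{NoSj} proceeds exactly by the ground-state (Doob) substitution $f\mapsto (1+x)^{\delta/2}f$, the intertwining identity you wrote down, and a Feynman--Kac/Trotter domination using $W\ge 0$. Your constant $c=\tfrac{\delta}{2}(\a+\b+1+\delta/2)$ and potential $W(x)=\delta(\b+\delta/2)/(1+x)$ match, and the hypothesis $\b\ge -\delta/2$ is used precisely where you use it.
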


In the next lemma, we restrict  $\vp$  to  $[0, \pi/2]$.

\begin{lem} \label{lem:us}
If \eqref{jac:gen} with $\psi=0$ holds uniformly in $\varphi \in [0,\pi/2]$ whenever $\a,\b \ge -1/2$ and
$\a+\b$ is half-integer, then \eqref{jac:spec} holds uniformly in $\varphi \in [0,\pi/2]$ whenever $\lambda > 0$.
\end{lem}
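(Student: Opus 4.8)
The plan is to reduce the general-parameter statement \eqref{jac:spec} to the half-integer case via the reduction formula \eqref{red} together with the measure $\Pi_\nu$ and Lemma \ref{lem:FVII}. Concretely, fix $\lambda>0$ and write $\lambda = \kappa + m$, where $m$ is a nonnegative integer chosen so that $\kappa \in (-1/2,1/2]$ when $\lambda$ is not a half-integer (if $\lambda$ itself is half-integer there is nothing to prove, by hypothesis). First I would apply the comparison principle, Lemma \ref{lem:comp}, to pass from $G_t^{\lambda,\lambda}$ to $G_t^{\kappa,\kappa}$ at the expense of the factor $[(1+x)(1+y)]^{\delta/2}$ with $\delta = 2m$, evaluated at $y=1$; this contributes $2^{m}(1+\cos\varphi)^{m} \simeq (\cos\tfrac{\varphi}{2})^{2m} \simeq 1$ for $\varphi \in [0,\pi/2]$, so on this range the comparison factor is harmless in both directions — though one must be careful that Lemma \ref{lem:comp} only gives one-sided bounds, so I would need to use it twice, once in each direction, or argue that $\kappa$ can be reached from a half-integer parameter from both sides.

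Next, the core of the argument: I would express $G_{t}^{\kappa,\kappa}(\cos\varphi,1)$ by applying \eqref{red} with $\a=\b=\kappa$ and $\psi=0$. When $\psi=0$ the inner kernel becomes $G_{t/4}^{2\kappa+1/2,2\kappa+1/2}(v\cos\tfrac{\varphi}{2},1)$, and the $u$-integration is trivial since the argument no longer depends on $u$ (indeed $\sin\tfrac{\psi}{2}=0$). So
\[
G_t^{\kappa,\kappa}(\cos\varphi,1) = \widetilde{\mathcal C} \int G_{t/4}^{2\kappa+1/2,2\kappa+1/2}\Big(v\cos\tfrac{\varphi}{2},1\Big)\, d\Pi_{\kappa}(v).
\]
Now $2\kappa + 1/2$ is half-integer precisely when $\kappa$ is a quarter-integer of the right type; but that is not enough — $2\kappa+1/2$ need only satisfy $2\kappa+1/2 \ge -1/2$, and we want to invoke \eqref{jac:gen} (hypothesis of the lemma, with $\psi = 0$) for the parameter pair $(2\kappa+1/2, 2\kappa+1/2)$, whose parameter sum $4\kappa+1$ must be half-integer. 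This is where the hypothesis "$\a+\b$ half-integer" is exactly calibrated. So I would instead reiterate: choose the decomposition so that after one application of \eqref{red} the resulting diagonal parameter $2\kappa+1/2$ has half-integer sum, i.e. $4\kappa+1 \in \tfrac12\Z$; if not, peel off another integer via Lemma \ref{lem:comp} first. Granting the hypothesis, substitute the assumed bound for $G_{t/4}^{2\kappa+1/2,2\kappa+1/2}(\cos\Phi,1)$ where $\cos\Phi = v\cos\tfrac{\varphi}{2}$, i.e. $\Phi = \arccos(v\cos\tfrac{\varphi}{2}) = \Phi_{0,\cos(\varphi/2)}(v)$ in the notation of Lemma \ref{lem:FVII}. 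The assumed estimate gives, with $D = t$ up to constants,
\[
G_{t/4}^{2\kappa+1/2,2\kappa+1/2}(\cos\Phi,1) \simeq t^{-2\kappa-1}\big[t+\pi-\Phi\big]^{-2\kappa-1}\frac{1}{\sqrt t}\exp\!\Big(\!-\frac{\Phi^2}{t}\Big),
\]
and integrating this against $d\Pi_\kappa(v)$ is precisely the integral evaluated by Lemma \ref{lem:FVII}, with $\nu = \kappa$, $A=0$, $B=\cos\tfrac{\varphi}{2}$, and $\xi = 2\kappa+1$ for the polynomial-in-$(\pi-\Phi)$ factor (the $t^{-2\kappa-1}t^{-1/2}$ prefactor pulls out). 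Lemma \ref{lem:FVII} then produces $t^{\kappa+1/2}$ times a factor $\big(B(\pi-\Phi_{A,B}(1))^{-1}+t\big)^{-\kappa-1/2}$ times $[\,t+\pi-\Phi_{A,B}(1)\,]^{-2\kappa-1}\exp(-\Phi_{A,B}^2(1)/t)$; and since $\Phi_{A,B}(1) = \arccos(\cos\tfrac{\varphi}{2}) = \tfrac{\varphi}{2}$, one has $\pi - \Phi_{A,B}(1) = \pi - \tfrac{\varphi}{2}$ and $B = \cos\tfrac{\varphi}{2}$, so $B(\pi-\varphi/2)^{-1} \simeq 1$ on $\varphi\in[0,\pi/2]$. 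Collecting exponents: $t^{-2\kappa-1-1/2}\cdot t^{\kappa+1/2} = t^{-\kappa-1}\cdot t^{-1/2}$... here I would recount carefully, but the point is the powers of $t$ and of $[t+\pi-\varphi]$ reassemble into exactly $t^{-\kappa-1/2}[t+\pi-\varphi]^{-\kappa-1/2}t^{-1/2}\exp(-(\varphi/2)^2/t)$, which after rescaling $t \mapsto 4t$ and noting $\Phi_{A,B}(1)^2 = (\varphi/2)^2$ so $\exp(-(\varphi/2)^2/(t/4)) = \exp(-\varphi^2/(4\cdot(t/4)))$... indeed matches \eqref{jac:spec} for the parameter $\kappa$. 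Finally undo the comparison step to return to $\lambda$.

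The main obstacle I anticipate is bookkeeping the parameter arithmetic so that exactly one application of \eqref{red} lands on a pair with half-integer sum covered by the hypothesis — in general $\kappa \mapsto 2\kappa + 1/2$ does not preserve the needed arithmetic condition, so the reduction must be organized as: strip integers with Lemma \ref{lem:comp} until the target diagonal parameter after \eqref{red} is in the hypothesized class, which requires checking this can always be arranged for $\lambda>0$ non-half-integer, and simultaneously that the comparison factor stays $\simeq 1$ on $[0,\pi/2]$ — this is why the lemma is stated only for $\varphi\in[0,\pi/2]$, since $(1+\cos\varphi)^{m}$ is bounded below only away from $\varphi=\pi$. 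A secondary but routine obstacle is matching the exponents in the final collection step and tracking the constants $\mathcal C_{\a,\b}$, $\widetilde{\mathcal C}$ through the rescalings $t\mapsto t/4$; these are mechanical once the structure above is in place. I would also need to confirm that the hypothesis is invoked with $\psi=0$ only, which is consistent since \eqref{red} with $\psi=0$ kills the $u$-variable and leaves a one-parameter family indexed by $\varphi$ alone.
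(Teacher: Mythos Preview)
Your plan has a genuine gap, and it stems from overcomplicating the argument by insisting on diagonal parameters $G^{\kappa,\kappa}$ throughout.

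First, the reduction ``pass from $G_t^{\lambda,\lambda}$ to $G_t^{\kappa,\kappa}$ via Lemma~\ref{lem:comp}'' does not work. Lemma~\ref{lem:comp} shifts only the second parameter $\b$; the symmetric statement for $\a$ carries the factor $[(1-x)(1-y)]^{\delta/2}$, which vanishes at $y=1$ and therefore gives no usable inequality at the point $(\cos\vp,1)$. So you can move $\b$ freely but you cannot touch $\a$. This already blocks the first step of your outline.

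Second, even ignoring that, your parameter arithmetic cannot be arranged. You need $4\kappa+1$ to be a half-integer after writing $\lambda=\kappa+m$ with $m\in\Z$; for generic $\lambda$ this is impossible, and you correctly flag this as an obstacle but offer no way around it. Iterating \eqref{red} does not help either, since each application sends $\kappa\mapsto 2\kappa+\tfrac12$, which never lands on the right arithmetic class from a generic starting point.

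The key observation you are missing is that the hypothesis allows $\a\neq\b$: it only demands $\a+\b$ half-integer. The paper's proof exploits exactly this. Keep $\a=\lambda$ fixed and use Lemma~\ref{lem:comp} (in $\b$ only) to sandwich
\[
G_t^{\lambda,\,N/2+1/2-\lambda}(\cos\vp,1)\ \lesssim\ G_t^{\lambda,\lambda}(\cos\vp,1)\ \lesssim\ G_t^{\lambda,\,N/2-\lambda}(\cos\vp,1),
\]
choosing $N\in\{0,1,2,\dots\}$ with $N/2<2\lambda\le N/2+1/2$. Both outer kernels have parameter sums $N/2+1/2$ and $N/2$, which are half-integers, so the hypothesis applies directly to each. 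On $\vp\in[0,\pi/2]$ the factor $[t+(\pi-\vp)\pi]^{-\b-1/2}\simeq 1$ in the right-hand side of \eqref{jac:gen} with $\psi=0$, so both bounds collapse to the same expression, namely the right-hand side of \eqref{jac:spec}. No use of \eqref{red} or Lemma~\ref{lem:FVII} is needed here; those tools belong to Lemmas~\ref{lem:spec} and~\ref{lem:gen}, not to this step.
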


\begin{proof}
By Lemma \ref{lem:comp} we have
$$
G_t^{\a,\b+\delta}(\cos\varphi,1) \lesssim
G_t^{\a,\b}(\cos\varphi,1), \qquad \varphi \in [0,\pi/2], \quad 0 < t \le 1,
$$
provided that $\b \ge -\delta/2$ and $\delta \ge 0$;
the implicit multiplicative constant here depends only on $\a,\b$ and $\delta$.

Let $\lambda > 0$ and $N \in \{0,1,2,\ldots \}$ be such that $N/2 < 2\lambda \le N/2+1/2$.
Taking above $\a=\lambda$ and either $\b=\lambda$, $\delta=N/2+1/2-2\lambda$ or $\b=\lambda-\delta$
with $\delta = 2\lambda-N/2$ (notice that in both cases $\b > -1$, $\b \ge -\delta/2$ and $\delta \ge 0$), we obtain
$$
G_{t}^{\lambda,N/2+1/2-\lambda}(\cos\varphi,1) \lesssim G_{t}^{\lambda,\lambda}(\cos\varphi,1) \lesssim
	G_{t}^{\lambda,N/2-\lambda}(\cos\varphi,1), \qquad \varphi \in [0,\pi/2], \quad 0 < t \le 1,
$$
where the implicit multiplicative constants depend only on $\lambda$.
From here the conclusion easily follows.
\end{proof}

\begin{lem} \label{lem:spec}
Let $\lambda > -1/2$ be fixed. If \eqref{jac:spec} holds uniformly in $\varphi \in [0,\pi/2]$, then
it also holds uniformly in $\varphi \in [0,\pi]$ with $\lambda$ replaced by $\lambda'=\lambda/2-1/4$.
\end{lem}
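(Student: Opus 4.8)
The plan is to deduce the $\vp\in[0,\pi]$ estimate for the parameter $\lambda'=\lambda/2-1/4$ from the $\vp\in[0,\pi/2]$ estimate for $\lambda$ via a quadratic change of variable that folds the interval $[0,\pi]$ onto $[0,\pi/2]$. The natural device is a known relation between Jacobi polynomials $P_n^{\lambda',\lambda'}$ on $[-1,1]$ and $P_m^{\lambda',-1/2}$ (equivalently, $P^{\lambda,\lambda}$ after adjusting the first parameter), coming from the fact that an even function on $[-1,1]$ is a function of $x^2$; concretely, $P_{2n}^{\lambda',\lambda'}(t)=c_n P_n^{\lambda',-1/2}(2t^2-1)$. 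Summing the defining series \eqref{iden3} and using $\lambda^{\lambda',\lambda'}_{2n}=2n(2n+2\lambda'+1)=4n(n+\lambda'+1/2)=4\lambda^{\lambda'+?}_n$ with the right bookkeeping, one gets that the even part of $G_t^{\lambda',\lambda'}(\cos\vp,1)$ — but note $1$ is a symmetry point, so the kernel at $y=1$ is automatically built from all $n$, and the substitution $\cos\vp=2\cos^2(\vp/2)-1$ relates it to a Jacobi kernel in the variable $\cos\vp'$ with $\vp'$ such that $\cos\vp'=\cos\vp$... more carefully, the quadratic map sends the half-angle: if $\cos\theta=2\cos^2(\vp/2)-1=\cos\vp$ one learns nothing, so instead the correct identity puts $G_t^{\lambda',\lambda'}(\cos\vp,1)$ in terms of $G_{4t}^{\lambda',-1/2}(\cos(2\cdot),1)$-type object, whose first parameter after the standard $[-1,1]\to[-1,1]$ normalization is exactly $\lambda$ when $\lambda'=\lambda/2-1/4$ (since $2\lambda'+1/2+1/2 = 2\lambda'+1$... the arithmetic is $\alpha\mapsto\lambda'$, $\beta\mapsto -1/2$, and the ``doubling'' shifts $\lambda'$ to $2\lambda'+1/2=\lambda$). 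This is the identity I would isolate first.

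With such an identity in hand, the proof is short. First I would state precisely the polynomial/heat-kernel relation of the form
\begin{equation*}
G_t^{\lambda,\lambda}(\cos\vp,1) = \textrm{const}\cdot G_{t/4}^{\lambda',\lambda'}\Big(\cos\frac{\vp}{2}\cdot\textrm{(something)},1\Big)
\end{equation*}
or rather its inverse, solving for $G^{\lambda',\lambda'}$, and check the constant and the time-scaling by comparing the $n=0$ terms and one eigenvalue. Here $\vp\in[0,\pi]$ corresponds to a half-angle in $[0,\pi/2]$, which is exactly the range in which the hypothesis of the lemma is available. Then I would substitute the assumed bound \eqref{jac:spec} for $G^{\lambda,\lambda}$ on $[0,\pi/2]$ into that identity. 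Tracking the three factors: the Gaussian $\exp(-\vp^2/(4t))$ transforms correctly because $(\vp/2)^2/(4\cdot t/4)=\vp^2/(4t)$ up to the change $\vp\leftrightarrow$ its half; the factor $t^{-\lambda-1/2}$ becomes $(t/4)^{-\lambda-1/2}\simeq t^{-\lambda-1/2}$; and the crucial factor $[t+\pi-\vp]^{-\lambda-1/2}$, evaluated at the half-angle, turns into $[t/4+\pi-\vp/2]^{-\lambda-1/2}\simeq[t+\pi-\vp]^{-\lambda'-1/2}[\cdots]$ — this is where the exponents $\lambda$ and $\lambda'$ get reconciled, using $\vp\in[0,\pi]$ so that $\pi-\vp/2\simeq 1$ when $\vp$ is bounded away from $\pi$... and more care is needed near $\vp=\pi$. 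After regrouping one should land exactly on \eqref{jac:spec} with $\lambda$ replaced by $\lambda'$ and $\vp$ ranging over $[0,\pi]$.

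The main obstacle I anticipate is getting the quadratic substitution to produce precisely the parameter shift $\lambda\mapsto\lambda'=\lambda/2-1/4$ together with the correct behaviour of the two polynomial factors $[t+\vp\psi]^{-\a-1/2}$ and $[t+(\pi-\vp)(\pi-\psi)]^{-\b-1/2}$ at the endpoints. Because $\psi=1$ (i.e. $\psi$-angle $=0$) the second factor in \eqref{jac:gen} for the source kernel $G^{\lambda,\lambda}$ is $[t+(\pi-\vp/2)\pi]^{-\lambda-1/2}$, which is $\simeq[t+\pi-\vp/2]^{-\lambda-1/2}\simeq 1$ away from $\vp=\pi$ but genuinely degenerates as $\vp\to\pi$ (half-angle $\to\pi/2$), and one must verify that this degeneration matches $[t+\pi-\vp]^{-\lambda'-1/2}$ near $\vp=\pi$ after the substitution — since $\pi-\vp\asymp(\pi-\vp/2-\pi/2)\cdot 2$... actually $\pi-\vp = 2(\pi/2-\vp/2)$, and one compares $[t+\pi/2-\vp/2]^{-\lambda-1/2}$ against $[t+\pi-\vp]^{-\lambda'-1/2}=[t+2(\pi/2-\vp/2)]^{-\lambda'-1/2}$, so $\lambda+1/2 = ?$ vs $\lambda'+1/2$: these do NOT match unless an extra factor of $[t+\pi-\vp/2]^{\lambda-\lambda'}\simeq$const (valid since this quantity is $\asymp 1$ for $\vp\in[0,\pi]$) is extracted from the first factor. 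So the endpoint bookkeeping near $\vp=\pi$ is delicate and is where I would spend most of the effort; the region $\vp\in[0,\pi/2]$ of the conclusion is in any case already covered by the hypothesis directly (or nearly so), so the real content is $\vp\in[\pi/2,\pi]$.
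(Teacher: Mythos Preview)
Your approach has a genuine gap: the direct quadratic identity you are looking for does not exist. The quadratic transformation for Jacobi polynomials relates $P_{2n}^{\alpha,\alpha}$ to $P_n^{\alpha,-1/2}$ (and $P_{2n+1}^{\alpha,\alpha}$ to $P_n^{\alpha,1/2}$); at the heat-kernel level it connects $G^{\alpha,\alpha}$ with $G^{\alpha,\pm 1/2}$, keeping the \emph{first} parameter fixed. There is no pointwise identity of the shape $G_t^{\lambda',\lambda'}(\cos\varphi,1)=c\,G_{t/4}^{\lambda,\lambda}(\cos(\varphi/2),1)$ with $\lambda=2\lambda'+1/2\neq\lambda'$. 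More decisively, any such pointwise relation would be incompatible with \eqref{jac:spec}: plugging the assumed bound for $G^{\lambda,\lambda}$ at the half-angle gives the $t$-power $t^{-\lambda-1}$, whereas the target bound for $G^{\lambda',\lambda'}$ requires $t^{-\lambda'-1}$. Since $\lambda\neq\lambda'$, no amount of endpoint bookkeeping with the factor $[t+\pi-\varphi]^{-\cdot}$ can repair this mismatch --- the discrepancy is in the bulk, not only near $\varphi=\pi$.

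What the paper actually uses is the \emph{integral} reduction formula \eqref{red} with $\alpha=\beta=\lambda'$, which gives
\[
G_t^{\lambda',\lambda'}(\cos\varphi,1)\simeq\int G_{t/4}^{\lambda,\lambda}\Big(v\cos\tfrac{\varphi}{2},1\Big)\,d\Pi_{\lambda'}(v),
\]
since $\lambda'+\lambda'+1/2=\lambda$. For $v\in[0,1]$ and $\varphi\in[0,\pi]$ the argument $\arccos(v\cos\tfrac{\varphi}{2})$ lies in $[0,\pi/2]$, so the hypothesis applies inside the integral. The integration against $d\Pi_{\lambda'}$, evaluated by Lemma~\ref{lem:FVII}, is precisely what manufactures the missing factor $t^{\lambda'+1/2}$ and simultaneously produces the correct endpoint factor $(t+\pi-\varphi)^{-\lambda'-1/2}$. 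The integral is not a technicality here; it is the mechanism that changes the parameter.
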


\begin{rem} \emph{
Post factum, Lemma \ref{lem:spec} makes it possible to simplify the reasonings in \cite{NoSj} and \cite{NoSjSz}. The crucial
point is that one can start with the restricted range of $\varphi$ and thus avoid the most delicate situation
when the arguments of the kernel are near opposite endpoints of $[-1,1]$.}
\end{rem}

\begin{proof}[{Proof of Lemma \ref{lem:spec}}]
We consider $0 < t \le 1$ and $\varphi \in [0,\pi]$. From the reduction formula \eqref{red} we get
$$
G_t^{\lambda',\lambda'}(\cos\varphi,1) \simeq
	\int G_{t/4}^{\lambda,\lambda}\Big(v \cos\frac{\varphi}2,1\Big) \,d\Pi_{\lambda'}(v).
$$
The main contribution to the integral here comes from the integration
over $[0,1]$. This follows from the symmetry of $d\Pi_{\lambda'}$ and the inequality
$$
G_{t}^{\lambda,\lambda}(-x,1) \le G_t^{\lambda,\lambda}(x,1), \qquad x \in [0,1], \quad t > 0,
$$
which is a direct consequence of the second identity in Lemma \ref{lem:qiden} below
and of the positivity of the Jacobi heat kernel.
Thus
$$
G_t^{\lambda',\lambda'}(\cos\varphi,1) \simeq
	\int_{[0,1]} G_{t/4}^{\lambda,\lambda}\Big(v \cos\frac{\varphi}2,1\Big) \,d\Pi_{\lambda'}(v).
$$

Let $f(v,\varphi) = \arccos(v\cos\frac{\varphi}2)$.
Plugging in the assumed bound \eqref{jac:spec} for $G_{t/4}^{\lambda,\lambda}$ in the integral above
(notice that for $v \in [0,1]$ and $\varphi \in [0,\pi]$ one has $f(v,\varphi) \in [0,\pi/2]$),
we see that we need only show that
\begin{align*}
 \int_{[0,1]} 
	\exp\bigg({-\frac{f^2(v,\varphi)}{t}}\bigg) \, d\Pi_{\lambda'}(v)
 \simeq t^{\lambda'+1/2} (t+\pi-\varphi)^{-\lambda'-1/2} \exp\bigg(-\frac{\varphi^2}{4t}\bigg),
\end{align*}
uniformly in $\varphi \in [0,\pi]$ and $0 < t \le 1$.
This, however, is a direct consequence of Lemma \ref{lem:FVII} applied with $\xi=0$, $\nu=\lambda'$,
$A=0$, $B=\cos\frac{\varphi}2$ and $D=t$.
\end{proof}

\begin{lem} \label{lem:gen}
Let $\a,\b \ge -1/2$ be fixed. If \eqref{jac:spec} holds uniformly in $\varphi \in [0,\pi]$ with
$\lambda = \a+\b+1/2$, then \eqref{jac:gen} holds uniformly in $\varphi,\psi \in [0,\pi]$.
\end{lem}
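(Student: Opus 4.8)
The plan is to substitute the assumed one-dimensional bound \eqref{jac:spec} into the reduction formula \eqref{red} and then carry out the two resulting integrations using Lemma~\ref{lem:FVII}. Throughout we may assume $\varphi\ge\psi$, since both sides of \eqref{jac:gen} are symmetric in $\varphi$ and $\psi$; we fix $0<t\le1$ and write $\lambda=\a+\b+1/2$, so that $\lambda+1/2=(\a+1/2)+(\b+1/2)$. Set $z(u,v)=u\sin\frac{\varphi}{2}\sin\frac{\psi}{2}+v\cos\frac{\varphi}{2}\cos\frac{\psi}{2}$ and $\theta(u,v)=\arccos z(u,v)$, so that \eqref{red} reads $G_t^{\a,\b}(\cos\varphi,\cos\psi)=\mathcal{C}_{\a,\b}\iint_{[-1,1]^2}G_{t/4}^{\lambda,\lambda}(z(u,v),1)\,d\Pi_{\a}(u)\,d\Pi_{\b}(v)$. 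The first step would be to cut the integration down to $[0,1]^2$. The lower bound is immediate by positivity of the kernel. For the matching upper bound one uses the triangle inequality $z(u,v)\le|z(u,v)|\le|u|\sin\frac{\varphi}{2}\sin\frac{\psi}{2}+|v|\cos\frac{\varphi}{2}\cos\frac{\psi}{2}=z(|u|,|v|)\le\cos\frac{\varphi-\psi}{2}\le1$ together with $G_{t/4}^{\lambda,\lambda}(z(u,v),1)\le G_{t/4}^{\lambda,\lambda}(|z(u,v)|,1)\lesssim G_{t/4}^{\lambda,\lambda}(z(|u|,|v|),1)$; the first inequality here is the reflection inequality $G_s^{\lambda,\lambda}(-x,1)\le G_s^{\lambda,\lambda}(x,1)$ for $x\in[0,1]$ used in the proof of Lemma~\ref{lem:spec} (a consequence of Lemma~\ref{lem:qiden} and positivity), while the second one holds because \eqref{jac:spec} forces $G_s^{\lambda,\lambda}(\cos\theta,1)\simeq s^{-\lambda-1}\exp(-\theta^2/(4s))$ uniformly for $\theta\in[0,\pi/2]$ and $0<s\le1$, whence $G_s^{\lambda,\lambda}(z_1,1)\lesssim G_s^{\lambda,\lambda}(z_2,1)$ whenever $0\le z_1\le z_2\le1$. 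Integrating and using the symmetry of $\Pi_{\a}$ and $\Pi_{\b}$ about the origin, one obtains $G_t^{\a,\b}(\cos\varphi,\cos\psi)\simeq\iint_{[0,1]^2}G_{t/4}^{\lambda,\lambda}(z(u,v),1)\,d\Pi_{\a}(u)\,d\Pi_{\b}(v)$.

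On $[0,1]^2$ one has $z(u,v)\in[0,1]$, so inserting \eqref{jac:spec} (with parameter $\lambda$, at time $t/4\le1/4$) gives $G_t^{\a,\b}(\cos\varphi,\cos\psi)\simeq t^{-\lambda-1}\mathcal{I}$, where $\mathcal{I}=\iint_{[0,1]^2}(t+\pi-\theta(u,v))^{-\lambda-1/2}\exp(-\theta^2(u,v)/t)\,d\Pi_{\a}(u)\,d\Pi_{\b}(v)$; hence it remains to prove $\mathcal{I}\simeq t^{\lambda+1/2}(t+\varphi\psi)^{-\a-1/2}(t+(\pi-\varphi)(\pi-\psi))^{-\b-1/2}\exp(-(\varphi-\psi)^2/(4t))$, which after multiplication by $t^{-\lambda-1}$ is precisely \eqref{jac:gen}. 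I would compute $\mathcal{I}$ by iterating Lemma~\ref{lem:FVII}. Doing the $u$-integration first, for fixed $v\in[0,1]$, one applies Lemma~\ref{lem:FVII} with $\nu=\a$, $\xi=\lambda+1/2=\a+\b+1$, $A=v\cos\frac{\varphi}{2}\cos\frac{\psi}{2}$, $B=\sin\frac{\varphi}{2}\sin\frac{\psi}{2}$ and $D=t$; its hypotheses hold since $A+B\le\cos\frac{\varphi-\psi}{2}\le1$. The endpoint value $\theta(1,v)=\arccos(v\cos\frac{\varphi}{2}\cos\frac{\psi}{2}+\sin\frac{\varphi}{2}\sin\frac{\psi}{2})$ lies in $[0,\pi/2]$, so $\pi-\theta(1,v)\simeq1$, the polynomial factors produced by Lemma~\ref{lem:FVII} collapse, and the inner integral is $\simeq t^{\a+1/2}(t+\sin\frac{\varphi}{2}\sin\frac{\psi}{2})^{-\a-1/2}\exp(-\theta^2(1,v)/t)\simeq t^{\a+1/2}(t+\varphi\psi)^{-\a-1/2}\exp(-\theta^2(1,v)/t)$, using $\sin\frac{\varphi}{2}\simeq\varphi$ and $\sin\frac{\psi}{2}\simeq\psi$ on $[0,\pi]$. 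It then remains to treat $\int_{[0,1]}\exp(-\theta^2(1,v)/t)\,d\Pi_{\b}(v)$, to which Lemma~\ref{lem:FVII} applies once more, now with $\nu=\b$, $\xi=0$, $A=\sin\frac{\varphi}{2}\sin\frac{\psi}{2}$, $B=\cos\frac{\varphi}{2}\cos\frac{\psi}{2}$, $D=t$; here the endpoint value is $\arccos(\cos\frac{\varphi-\psi}{2})=\frac{\varphi-\psi}{2}$ (recall $\varphi\ge\psi$), so the Gaussian factor becomes $\exp(-(\varphi-\psi)^2/(4t))$, and since $\pi-\frac{\varphi-\psi}{2}\simeq1$ and $\cos\frac{\varphi}{2}\simeq\pi-\varphi$, $\cos\frac{\psi}{2}\simeq\pi-\psi$ on $[0,\pi]$, this integral is $\simeq t^{\b+1/2}(t+(\pi-\varphi)(\pi-\psi))^{-\b-1/2}\exp(-(\varphi-\psi)^2/(4t))$. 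Multiplying the two estimates yields the asserted equivalence for $\mathcal{I}$.

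The bookkeeping with Lemma~\ref{lem:FVII} and the elementary comparabilities $\sin\frac{\varphi}{2}\simeq\varphi$ and $\cos\frac{\varphi}{2}\simeq\pi-\varphi$ on $[0,\pi]$ are routine. The step that I expect to require genuine care is the reduction to $[0,1]^2$: one cannot dominate the integrand on the remaining three quadrants pointwise by its value in the first one, because there $\theta(u,v)=\arccos z(u,v)$ may exceed $\pi/2$, making the factor $(t+\pi-\theta(u,v))^{-\lambda-1/2}$ large; one must instead argue at the level of the heat kernel itself rather than its pointwise bound, combining the reflection inequality from Lemma~\ref{lem:qiden}, the up-to-constants monotonicity of $G_s^{\lambda,\lambda}(\cdot,1)$ on $[0,1]$ provided by \eqref{jac:spec}, and the triangle inequality $|z(u,v)|\le z(|u|,|v|)$.
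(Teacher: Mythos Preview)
Your argument is correct and mirrors the paper's: insert \eqref{jac:spec} into \eqref{red}, reduce the integration to $[0,1]^2$, note that there $\theta(u,v)\in[0,\pi/2]$ so the polynomial factor is $\simeq1$, and apply Lemma~\ref{lem:FVII} twice (the paper integrates in $v$ first and uses $\xi=0$ both times, having discarded the polynomial factor up front, but this is immaterial). Your worry in the last paragraph is unnecessary: the paper \emph{does} dominate the explicit integrand pointwise across quadrants, via the elementary inequality \eqref{estZ}, in which the Gaussian decay absorbs the possible blow-up of $(t+\pi-\theta)^{-\lambda-1/2}$; your heat-kernel-level argument (reflection from Lemma~\ref{lem:qiden} plus up-to-constants monotonicity on $[0,1]$) is a valid alternative, just not required.
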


\begin{proof}
Let 
$f(u,v,\vp,\psi) = \arccos( u \sin\frac{\varphi}2\sin\frac{\psi}2 + v \cos\frac{\varphi}2 \cos\frac{\psi}2)$.
Combining \eqref{red} with the assumed relation \eqref{jac:spec}, we see that our task is reduced to showing that
\begin{align*}
& \iint 
\big[ t+ \pi - f(u,v,\vp,\psi) \big]^{-\a - \b -1} 
\exp\bigg( -\frac{f^2(u,v,\vp,\psi)}{t}\bigg) \, d\Pi_{\a}(u)d\Pi_{\b}(v) \\
& \quad \simeq
t^{\a + \b + 1} \big( t+ \varphi\psi\big)^{-\a-1/2}
	\big[ t + (\pi-\varphi)(\pi-\psi)\big]^{-\b-1/2} 
\exp\bigg( -\frac{(\varphi-\psi)^2}{4t}\bigg),
\end{align*}
uniformly in $\varphi,\psi \in [0,\pi]$ and $0 < t \le 1$.

We first observe that the main contribution in the integral above comes from the integration over $[0,1]^2$.
Indeed, this can be  verified by reflecting $u \mapsto - u$ and/or 
$v \mapsto - v$ and using the symmetry of the measures $d\Pi_{\a}$ and $d\Pi_{\b}$ and then the estimate
\begin{align} \label{estZ}
(t + \pi - \t )^{\gamma} \exp\bigg( -\frac{\t^2}{4t} \bigg)
\lesssim 
(t + \pi - \tau )^{\gamma} \exp\bigg( -\frac{\tau^2}{4t} \bigg),
\qquad 0 \le \tau \le \t \le \pi, \quad t > 0,
\end{align}
where $\gamma \in \R$ is fixed. Here we let $\gamma = - \a -\b - 1$, $\t = f(\pm u,\pm v, \vp, \psi)$,
$\tau = f( u, v, \vp, \psi)$ with $u,v \in [0,1]$ and $\vp,\psi \in [0,\pi]$.
To justify this estimate, it is convenient to distinguish the two cases $\t \le 3\pi/4$ and $\t > 3\pi/4$. 
We omit elementary details.

Further, using the fact that $f(u,v,\vp,\psi) \in [0,\pi/2]$ for $u,v \in [0,1]$ and $\vp,\psi \in [0,\pi]$,
we see that to finish the proof of Lemma \ref{lem:gen} it is enough to prove that
\begin{align} \nonumber
& \iint_{[0,1]^2} 
\exp\bigg( -\frac{f^2(u,v,\vp,\psi)}{t}\bigg) \, d\Pi_{\a}(u)d\Pi_{\b}(v) \\ \label{red2}
& \qquad \qquad \simeq
t^{\a + \b + 1} \big( t+ \varphi\psi\big)^{-\a-1/2}
	\big[ t + (\pi-\varphi)(\pi-\psi)\big]^{-\b-1/2} 
\exp\bigg( -\frac{(\varphi-\psi)^2}{4t}\bigg),
\end{align}
uniformly in $\varphi,\psi \in [0,\pi]$ and $0 < t \le 1$.

Denote the double integral in \eqref{red2} by $I$.
Applying Lemma~\ref{lem:FVII} to the integral with respect to $v$
(with $\xi=0$, $\nu = \b$, $A = u \sin\frac{\varphi}2\sin\frac{\psi}2$, 
$B = \cos\frac{\varphi}2 \cos\frac{\psi}2$ and $D=t$), we infer that
\begin{align*}
I \simeq
t^{ \b + 1/2 } \bigg( t + \cos\frac{\varphi}2 \cos\frac{\psi}2 \bigg)^{-\b -1/2} \int_{[0,1]} 
\exp\bigg( -\frac{f^2(u,1,\vp,\psi)}{t}\bigg) \, d\Pi_{\a}(u).
\end{align*}
Then another application of Lemma~\ref{lem:FVII} (this time specified to 
$\xi=0$, $\nu = \a$, $A = \cos\frac{\varphi}2 \cos\frac{\psi}2$, 
$B = \sin\frac{\varphi}2\sin\frac{\psi}2$ and $D=t$) produces
\begin{align*}
I \simeq
t^{ \a + \b + 1 } 
\bigg( t + \cos\frac{\varphi}2 \cos\frac{\psi}2 \bigg)^{-\b -1/2}
\bigg( t + \sin\frac{\varphi}2\sin\frac{\psi}2 \bigg)^{-\a -1/2}
\exp\bigg( -\frac{(\varphi-\psi)^2}{4t}\bigg).
\end{align*}
Since $\sin \t \simeq \t$ and $\cos \t \simeq \pi/2 - \t$ for $\t \in [0,\pi/2]$, we get \eqref{red2}.
\end{proof}

We are now in a position to prove Theorem \ref{thm:jac3}.
\begin{proof}[{Proof of Theorem \ref{thm:jac3}}]
For the sake of clarity, we divide the proof into several steps.
Recall that in \cite{NoSjSz} we proved that \eqref{jac:spec} holds uniformly in $\varphi \in [0,\pi]$
whenever $\lambda \ge -1/2$ is a half-integer.
This is our starting point in the present proof, call it STEP 0.

STEP 1. Using STEP 0, we infer from Lemma \ref{lem:gen} that \eqref{jac:gen} holds uniformly in
$\varphi,\psi \in [0,\pi]$ whenever the sum $\a+\b$ is a half-integer. In particular, we can take $\psi=0$
and restrict $\varphi$ to $[0,\pi/2]$.

STEP 2. From STEP 1 and Lemma \ref{lem:us}, we conclude that \eqref{jac:spec} holds uniformly in $\varphi \in [0,\pi/2]$
whenever $\lambda > 0$.

STEP 3. Use the result of STEP 2 and Lemma \ref{lem:spec} to see that \eqref{jac:spec} holds uniformly in
$\varphi \in [0,\pi]$
whenever $\lambda \ge -1/2$ (to cover $\lambda$ close to $-1/2$ Lemma \ref{lem:spec} must be iterated sufficiently
many times; the case $\lambda=-1/2$ is covered by STEP 0).

STEP 4. Combine the result of STEP 3 with Lemma \ref{lem:gen} to obtain that \eqref{jac:gen} holds uniformly in
$\varphi,\psi \in [0,\pi]$ whenever $\a,\b \ge -1/2$. This completes the proof.
\end{proof}

We finish  this section with two simple technical results in the Jacobi setting. Both of
them are most probably known, at least as folklore.
\begin{lem} \label{lem:diff}
Let $\a,\b > -1$. Then
$$
\frac{d}{dx} G_t^{\a,\b}(x,1) = 2 (\a+1) e^{-t(\a+\b+2)} G_t^{\a+1,\b+1}(x,1), \qquad x \in [-1,1], \quad t > 0.
$$
\end{lem}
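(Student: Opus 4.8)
The plan is to differentiate the series \eqref{iden3} term by term and recognize the result as another Jacobi heat kernel series. Recall the classical differentiation formula for Jacobi polynomials (see e.g.\ \cite{Sz}):
\begin{equation*}
\frac{d}{dx} P_n^{\a,\b}(x) = \tfrac12 (n+\a+\b+1) P_{n-1}^{\a+1,\b+1}(x), \qquad n \ge 1,
\end{equation*}
with the convention that the right-hand side is $0$ for $n=0$. Thus, assuming differentiation under the summation sign is legitimate, I would write
\begin{equation*}
\frac{d}{dx} G_t^{\a,\b}(x,1)
= \sum_{n=1}^{\infty} e^{-t\lambda_n^{\a,\b}} \frac{\tfrac12(n+\a+\b+1) P_{n-1}^{\a+1,\b+1}(x)\, P_n^{\a,\b}(1)}{h_n^{\a,\b}}.
\end{equation*}

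Next I would reindex by $m = n-1$ and rewrite everything in terms of the parameters $(\a+1,\b+1)$. Three ingredients are needed: first, the eigenvalue shift $\lambda_{m+1}^{\a,\b} = (m+1)(m+\a+\b+2) = m(m+\a+\b+3) + (\a+\b+2) = \lambda_m^{\a+1,\b+1} + (\a+\b+2)$, which produces the exponential prefactor $e^{-t(\a+\b+2)}$; second, the value at the endpoint, $P_n^{\a,\b}(1) = \binom{n+\a}{n}$, together with $P_m^{\a+1,\b+1}(1) = \binom{m+\a+1}{m}$, so that $P_{m+1}^{\a,\b}(1) = \frac{m+\a+1}{m+1}\binom{m+\a+1}{m} = \frac{m+\a+1}{m+1} P_m^{\a+1,\b+1}(1)$; third, the explicit norm $h_n^{\a,\b} = \frac{2^{\a+\b+1}}{2n+\a+\b+1}\cdot\frac{\Gamma(n+\a+1)\Gamma(n+\b+1)}{\Gamma(n+1)\Gamma(n+\a+\b+1)}$, whose ratio $h_{m+1}^{\a,\b}/h_m^{\a+1,\b+1}$ I would simplify using the functional equation of $\Gamma$. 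Assembling these and checking that all the $n$-dependent algebraic factors collapse to the single constant $2(\a+1)$ is the computational core of the argument; it is routine but must be done carefully, as the claimed constant is $2(\a+1)$ rather than something symmetric in $\a,\b$ (this asymmetry is consistent with evaluating the second argument at $1$, where the $(1-x)^\a$ weight is the one that matters).

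The main obstacle, such as it is, is the justification of term-by-term differentiation, i.e.\ interchanging $\frac{d}{dx}$ with the infinite sum. This is handled by the standard fact that for any fixed $t>0$ the differentiated series converges absolutely and uniformly on $[-1,1]$: the eigenvalues $\lambda_n^{\a,\b}$ grow quadratically in $n$, so $e^{-t\lambda_n^{\a,\b}}$ decays faster than any power, while $|P_n^{\a,\b}(x)|$, $|P_{n-1}^{\a+1,\b+1}(x)|$ and $P_n^{\a,\b}(1)$, divided by $h_n^{\a,\b}$ and multiplied by the polynomial factor $\tfrac12(n+\a+\b+1)$, grow at most polynomially in $n$ (uniformly in $x\in[-1,1]$, using $|P_n^{\a,\b}(x)| \le P_n^{\max(\a,\b),\max(\a,\b)}(1) \lesssim n^{\max(\a,\b)}$ for $\max(\a,\b)\ge -1/2$, and an elementary bound otherwise). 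Hence the differentiated series is dominated by a convergent one, which legitimizes the interchange and completes the proof.
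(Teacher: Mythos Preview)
Your approach is correct and essentially identical to the paper's: both differentiate the series \eqref{iden3} term by term using the classical derivative formula for Jacobi polynomials, then combine the explicit expressions \eqref{h_for} and \eqref{P1_for} to identify the result, with termwise differentiation justified by polynomial growth of $P_n^{\a,\b}$ against the quadratic-exponential decay $e^{-t\lambda_n^{\a,\b}}$. The paper is terser about the algebraic collapse to $2(\a+1)$, but your outlined verification via the eigenvalue shift and the ratios $P_{m+1}^{\a,\b}(1)/P_m^{\a+1,\b+1}(1)$ and $h_m^{\a+1,\b+1}/h_{m+1}^{\a,\b}$ is exactly what is needed.
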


The ultraspherical case $\a=\b$ of this lemma can be found e.g.\ in \cite{BGL}.
A similar formula in the context of compact Riemannian manifolds is due to Millson,
cf.\ \cite[Chapter VI, Section 3]{Ch1}.

\begin{proof}[{Proof of Lemma \ref{lem:diff}}]
Combine \eqref{iden3} with the well-known formulas (see \cite[Chapter IV]{Sz})
\begin{align}
h_n^{\a,\b} & = \frac{2^{\a+\b+1} \Gamma(n+\a+1)\Gamma(n+\b+1)}{(2n+\a+\b+1)\Gamma(n+\a+\b+1)\Gamma(n+1)}, \label{h_for} \\
P_n^{\a,\b}(1) & = \frac{\Gamma(n+\a+1)}{\Gamma(n+1)\Gamma(\a+1)}, \label{P1_for} \\
\frac{d}{dx} P_n^{\a,\b}(x) & = \frac{1}2 (n+\a+\b+1) P_{n-1}^{\a+1,\b+1}(x), \nonumber
\end{align}
with suitable understanding of the first one when $n=0=\a+\b+1$
(in this case the product $(2n+\alpha+\beta+1) \Gamma(n+\alpha+\beta+1)$ must be replaced by $\Gamma(\alpha+\beta+2)$),
and with the convention that $P_{-1}^{\a,\b}\equiv 0$ in the third one.
Differentiating the series in \eqref{iden3} termwise is possible thanks to suitable control of the growth of Jacobi
polynomials for large $n$, see \cite[(7.32.2)]{Sz} or, for instance, \cite[(8)]{NoSj}.
\end{proof}

\begin{lem} \label{lem:qiden}
Let $\a > -1$. Then for $x,y \in [-1,1]$ and $t > 0$
\begin{align*}
G_t^{\a,-1/2} (2x^2 - 1, 2y^2 - 1) & = 
2^{-\a - 3/2} \big[ G_{t/4}^{\a,\a} (x,y) + G_{t/4}^{\a,\a} (-x,y) \big], \\
G_t^{\a,1/2} (2x^2 - 1, 2y^2 - 1) & = 2^{-\a - 5/2} e^{t(\a+1)/2} (xy)^{-1}
	\big[ G_{t/4}^{\a,\a} (x,y) - G_{t/4}^{\a,\a} (-x,y) \big].
\end{align*}
\end{lem}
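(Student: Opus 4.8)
The plan is to expand both sides of each identity by means of the defining series \eqref{iden3} and compare coefficients, the bridge between the two sides being the classical quadratic transformations of Jacobi polynomials. The starting point is the parity relation $P_m^{\a,\a}(-x)=(-1)^m P_m^{\a,\a}(x)$. Inserting it into \eqref{iden3} shows that $G_{t/4}^{\a,\a}(x,y)+G_{t/4}^{\a,\a}(-x,y)$ kills the odd-indexed terms and doubles the even-indexed ones, so it equals
\[
2\sum_{n\ge 0} e^{-(t/4)\lambda^{\a,\a}_{2n}}\,\frac{P_{2n}^{\a,\a}(x)\,P_{2n}^{\a,\a}(y)}{h_{2n}^{\a,\a}},
\]
while $G_{t/4}^{\a,\a}(x,y)-G_{t/4}^{\a,\a}(-x,y)$ equals the same series with every $2n$ replaced by $2n+1$. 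Interchanging the reflection $x\mapsto-x$ with the summation is legitimate because \eqref{iden3} converges absolutely and locally uniformly, by the growth control on Jacobi polynomials already used in the proof of Lemma \ref{lem:diff}.

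Next I would bring in the quadratic transformations (see \cite[Chapter IV]{Sz}): there are positive constants $a_n,b_n$ with
\[
P_{2n}^{\a,\a}(x)=a_n\,P_n^{\a,-1/2}(2x^2-1),\qquad
P_{2n+1}^{\a,\a}(x)=b_n\,x\,P_n^{\a,1/2}(2x^2-1),\qquad x\in[-1,1].
\]
Identities of this shape are immediate from parity, since the image of the measure $(1-x^2)^\a\,dx$ on $[0,1]$ under $x\mapsto 2x^2-1$ is a constant multiple of $d\rho_{\a,-1/2}$, and the same image with the extra factor $x^2$ is a multiple of $d\rho_{\a,1/2}$. Evaluating the transformations at $x=1$ and using \eqref{P1_for} identifies
\[
a_n=\frac{\Gamma(2n+\a+1)\,\Gamma(n+1)}{\Gamma(2n+1)\,\Gamma(n+\a+1)},\qquad
b_n=\frac{\Gamma(2n+\a+2)\,\Gamma(n+1)}{\Gamma(2n+2)\,\Gamma(n+\a+1)}.
\]
A short computation from $\lambda^{\a,\b}_n=n(n+\a+\b+1)$ gives the matching of exponents,
\[
\tfrac{t}{4}\lambda^{\a,\a}_{2n}=t\,\lambda^{\a,-1/2}_{n},\qquad
\tfrac{t}{4}\lambda^{\a,\a}_{2n+1}=t\,\lambda^{\a,1/2}_{n}+\tfrac{t(\a+1)}{2},
\]
and the shift in the second relation is precisely what supplies the factor $e^{t(\a+1)/2}$ in the statement.

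Substituting these transformations into the two series above, and dividing by $xy$ in the odd case — the combination $G_{t/4}^{\a,\a}(x,y)-G_{t/4}^{\a,\a}(-x,y)$ is an odd function of each of $x$ and $y$, so $(xy)^{-1}$ times it extends continuously across $x=0$ and $y=0$ — one obtains series in $P_n^{\a,-1/2}(2x^2-1)P_n^{\a,-1/2}(2y^2-1)$ and in $P_n^{\a,1/2}(2x^2-1)P_n^{\a,1/2}(2y^2-1)$ respectively, carrying exactly the exponential factors of the series \eqref{iden3} for $G_t^{\a,-1/2}(2x^2-1,2y^2-1)$ and $G_t^{\a,1/2}(2x^2-1,2y^2-1)$. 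Matching coefficients reduces the two claimed identities to the purely algebraic relations
\[
h_{2n}^{\a,\a}=2^{-\a-1/2}\,a_n^2\,h_n^{\a,-1/2},\qquad
h_{2n+1}^{\a,\a}=2^{-\a-3/2}\,b_n^2\,h_n^{\a,1/2},\qquad n\ge 0.
\]

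These last relations are verified by plugging in the explicit norm formula \eqref{h_for} and simplifying with the Legendre duplication formula $\Gamma(2z)=\pi^{-1/2}2^{2z-1}\Gamma(z)\Gamma(z+1/2)$ — applied at $z=n+\tfrac12$ and $z=n+\a+1$ for the first relation, and at the analogous points for the second; in the exceptional case $\a=-1/2$, where the $n=0$ summand in \eqref{h_for} is governed by the stated convention, the relation is checked directly and is immediate. I expect this Gamma-function bookkeeping to be the only mildly delicate point; everything else is formal manipulation of absolutely convergent series and the tracking of elementary constants.
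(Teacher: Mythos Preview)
Your proof is correct and follows essentially the same route as the paper's: both arguments combine the quadratic transformations $P_{2n}^{\a,\a}(x)=a_n P_n^{\a,-1/2}(2x^2-1)$ and $P_{2n+1}^{\a,\a}(x)=b_n\,x\,P_n^{\a,1/2}(2x^2-1)$ with the eigenvalue identities $\lambda_{2n}^{\a,\a}=4\lambda_n^{\a,-1/2}$, $\lambda_{2n+1}^{\a,\a}=4\lambda_n^{\a,1/2}+2\a+2$, and then use \eqref{h_for} together with the duplication formula to match the norms. The only cosmetic difference is that the paper packages the norm computation as a pair of relations between $L^2$-normalized polynomials, whereas you write it as $h_{2n}^{\a,\a}=2^{-\a-1/2}a_n^2 h_n^{\a,-1/2}$ and $h_{2n+1}^{\a,\a}=2^{-\a-3/2}b_n^2 h_n^{\a,1/2}$; these are equivalent.
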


It is worth noting that, in view of this lemma, the ultraspherical heat kernel
$G_t^{\a,\a}$ can be expressed in a relatively simple way via $G_{4t}^{\a,-1/2}$ and $G_{4t}^{\a,1/2}$.

\begin{proof}[{Proof of Lemma \ref{lem:qiden}}]
We invoke the quadratic transformations (cf.\ \cite[(4.1.5)]{Sz})
\begin{align*}
P_{2n}^{\alpha,\alpha}(x) & = \frac{\Gamma(2n+\alpha+1)\Gamma(n+1)}{\Gamma(n+\alpha+1)\Gamma(2n+1)}
	P_n^{\alpha,-1\slash 2}(2x^2-1), \\
P_{2n+1}^{\alpha,\alpha}(x) & = \frac{\Gamma(2n+\alpha+2)\Gamma(n+1)}{\Gamma(n+\alpha+1)\Gamma(2n+2)}\,
	xP_n^{\alpha,1\slash 2}(2x^2-1).
\end{align*}
Using this together with \eqref{h_for} and then performing some computations with the aid of the duplication
formula $\Gamma(z)\Gamma(z+1\slash 2) = \sqrt{\pi}\, 2^{-2z+1}\Gamma(2z)$, one finds that
\begin{align*}
\frac{P_{2n}^{\alpha,\alpha}(x)}{\|P_{2n}^{\alpha,\alpha}\|_{2,(\alpha,\alpha)}} & =
	\sqrt{2^{\alpha+1\slash 2}} \,
	\frac{P_n^{\alpha,-1\slash 2}(2x^2-1)}{\|P_n^{\alpha,-1\slash 2}\|_{2,(\alpha,-1\slash 2)}}, \\
\frac{P_{2n+1}^{\alpha,\alpha}(x)}{\|P_{2n+1}^{\alpha,\alpha}\|_{2,(\alpha,\alpha)}} & =
	\sqrt{2^{\alpha+3\slash 2}} \,
	\frac{xP_n^{\alpha,1\slash 2}(2x^2-1)}{\|P_n^{\alpha,1\slash 2}\|_{2,(\alpha,1\slash 2)}}.
\end{align*}
This together with \eqref{iden3} and the fact that the eigenvalues are related by
$$
\lambda_{2n}^{\alpha,\alpha} = 4\lambda_n^{\alpha,-1\slash 2}, \qquad
	\lambda_{2n+1}^{\alpha,\alpha} = 4\lambda_n^{\alpha,1\slash 2} + 2\alpha+2,
$$
gives the conclusion.
\end{proof}

%%%%%%%%%%%%%%%%%%%%%%%%%%%%%%%%%%%%%%%%%%%%%%%%%%%%%%%%%%%%%%%%%%%%%%%%%%%%%%%%%%%%
\section{Heat kernel bounds on compact rank-one symmetric spaces} \label{sec:sym}
%%%%%%%%%%%%%%%%%%%%%%%%%%%%%%%%%%%%%%%%%%%%%%%%%%%%%%%%%%%%%%%%%%%%%%%%%%%%%%%%%%%%%

Let $\mathbb{M}$ be a compact symmetric space of rank one, viz., a compact two-point homogeneous space.
Thus $\mathbb{M}$ is a complete connected Riemannian manifold, with no boundary
and strictly positive sectional (and Ricci) curvature.
We denote by $d_{\mathbb{M}}$ the Riemannian geodesic distance on $\mathbb{M}$,
and by $\Delta_{\mathbb{M}}$ the Laplace-Beltrami operator. The Riemannian volume measure is denoted by $d\omega$.
The dimension of $\mathbb{M}$ will often be indicated by a superscript, i.e., $\mathbb{M}^d$ means
that $\mathbb{M}$ has dimension $d$ over the reals.

It is well known that for any $d \ge 1$, all possible $\mathbb{M}^d$ are given by the following list, see e.g. \cite{He1,Wa}:
\begin{itemize}
\item[(i)] Euclidean unit spheres $S^d$, $d=1,2,3,\ldots$
\item[(ii)] real projective spaces $\mathbb{P}^d(\mathbb{R})$, $d=2,3,4,\ldots$
\item[(iii)] complex projective spaces $\mathbb{P}^d(\mathbb{C})$, $d=4,6,8,\ldots$
\item[(iv)] quaternionic projective spaces $\mathbb{P}^d(\mathbb{H})$, $d=8,12,16,\ldots$
\item[(v)] the Cayley projective plane $\mathbb{P}^d(\mathbb{C}\textrm{ay})$, $d=16$.
\end{itemize}
Here $\mathbb{H}$ stands for the ring of quaternions, and $\mathbb{C}\textrm{ay}$ for the algebra of Cayley's octonions.
In (ii)--(iv) the lowest dimensions are omitted, since $\mathbb{P}^1(\mathbb{R}) = S^1$,
$\mathbb{P}^2(\mathbb{C}) = S^2$ and $\mathbb{P}^4(\mathbb{H})=S^4$.
Useful models of the spaces (i)--(v) can be found in \cite{Sh}.

Given a fixed point in $\mathbb{M}$, the set of points in $\mathbb{M}$ whose geodesic distance from it
is equal to the diameter of
$\mathbb{M}$ forms a submanifold of $\mathbb{M}$, which is called the antipodal manifold. In the case of $S^d$ it is trivial and
consists of only one point, while in the other cases (ii)--(v) the antipodal manifolds are identified, via isometric
isomorphisms, with
$\mathbb{P}^{d-1}(\mathbb{R})$, $\mathbb{P}^{d-2}(\mathbb{C})$, $\mathbb{P}^{d-4}(\mathbb{H})$ and $S^8$, respectively.
This was originally proved by Cartan \cite{Ca} and Nagano \cite{Na}, see also \cite{He0} and \cite{Ti} where the result is
re-obtained.
In our notation, the real dimensions of the corresponding antipodal manifolds will be indicated
by second superscripts, thus $\mathbb{M}^{d,\tilde{d}}$. 
We call $\tilde{d}$ the antipodal dimension.
Thus for $\mathbb{M}^d$ from the classes (i)--(v) we have, respectively,
$\mathbb{M}^{d,0}$, $\mathbb{M}^{d,d-1}$, $\mathbb{M}^{d,d-2}$, $\mathbb{M}^{d,d-4}$ and
$\mathbb{M}^{16,8}$.
Notice that the ``co-dimension'' $d-\tilde{d} \in \{d,1,2,4,8\}$
determines to which class (i)--(v) $\mathbb{M}^{d,\tilde{d}}$ belongs.

The operator $-\Delta_{\mathbb{M}}$, acting initially on $C^{\infty}(\mathbb{M})$, is essentially self-adjoint
in $L^2(\mathbb{M}, d\omega)$; see e.g.\ \cite[Theorem 5.2.3]{Da}.
The self-adjoint extension, which we denote by the same symbol, is non-negative.
The heat semigroup $\exp(t\Delta_{\mathbb{M}})$, $t \ge 0$, has an integral representation, cf.\ \cite[Chapter~VI]{Ch1},
$$
\exp(t\Delta_{\mathbb{M}})f(x) = \int_{\mathbb{M}} K_t^{\mathbb{M}}(x,y) f(y)\, d\omega(y),
	\qquad x \in \mathbb{M}, \quad t > 0, \quad f \in L^2(\mathbb{M},d\omega),
$$
where the kernel $K_t^{\mathbb{M}}(x,y)$ is strictly positive and smooth for
$(t,x,y) \in (0,\infty)\times \mathbb{M} \times \mathbb{M}$. See the general theory in e.g. \cite[Theorem 5.2.1]{Da}.
By means of an isometry of $\mathbb{M}$, we see that $K_t^{\mathbb{M}}(x,y)$ depends on $x$ and $y$ only through the distance
$d_{\mathbb{M}}(x,y)$.

The following sharp estimate for $K_t^{\mathbb{M}}$ is the most significant result of the paper.
\begin{thm} \label{thm:main}
Let $\mathbb{M} = \mathbb{M}^{d,\tilde{d}}$ be a compact rank-one Riemannian symmetric space of dimension $d\ge 1$
and antipodal dimension $\tilde{d}$. Then 
$$
K_t^{\mathbb{M}}(x,y)\simeq \big[t+\diam\mathbb{M}-d_{\mathbb{M}}(x,y)\big]^{-(d-\tilde{d}-1)/2}
	\frac{1}{t^{d/2}} \exp\bigg(-\frac{d_{\mathbb{M}}^{\,2}(x,y)}{4t}\bigg),
$$
uniformly in $x,y \in \mathbb{M}$ and $0 < t \le 1$.
The constants in the lower and upper estimates depend only on $d$ and $\tilde{d}$.
\end{thm}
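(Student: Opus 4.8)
The plan is to reduce the heat kernel on $\mathbb{M}^{d,\tilde d}$ to a Jacobi heat kernel via a suitable addition formula for the eigenfunctions of $\Delta_{\mathbb{M}}$, and then invoke Theorem~\ref{thm:jac3}. First I would recall the spectral decomposition of $-\Delta_{\mathbb{M}}$: the eigenfunctions are zonal (they depend only on $d_{\mathbb{M}}(x,y)$), and the corresponding zonal spherical functions are expressible as Jacobi polynomials $P_n^{\a,\b}$ in the variable $\cos\vartheta$, where $\vartheta = \pi\, d_{\mathbb{M}}(x,y)/\diam\mathbb{M}$ is a normalized geodesic distance, and $\a,\b$ are the half-integer parameters dictated by the geometry of $\mathbb{M}$. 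Concretely, for the sphere $S^d$ one has $\a=\b=(d-2)/2$, while for the projective spaces the parameters are $\a=(d-2)/2$ and $\b=(d-\tilde d-2)/2$, so that $\b\in\{-1/2,0,1,3\}$ according to the class (i)--(v); in all cases $\a,\b\ge -1/2$ and $\a+\b+1 = (2d-\tilde d-2)/2$. The eigenvalues $\lambda_n^{\mathbb{M}}$ of $-\Delta_{\mathbb{M}}$ are, up to a fixed affine rescaling in $n$ determined by $\diam\mathbb{M}$, the numbers $n(n+\a+\b+1)$, so that after rescaling time by a constant one gets $K_t^{\mathbb{M}}(x,y) = c_{\mathbb{M}}\, G_{ct}^{\a,\b}(\cos\vartheta,1)$ for suitable positive constants $c,c_{\mathbb{M}}$ depending only on $d,\tilde d$.

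Granting this identification, the proof is immediate: by Theorem~\ref{thm:jac3}, which gives \eqref{jac:gen}, and specializing to $\psi = 0$ (i.e. \eqref{jac:spec}) with $\lambda = \a+\b+1/2 = (2d-\tilde d - 3)/2$, we obtain
\begin{align*}
G_{ct}^{\a,\b}(\cos\vartheta,1) \simeq (ct)^{-\a-1/2}\big[ct + \pi - \vartheta\big]^{-\b-1/2}\frac{1}{\sqrt{ct}}\exp\!\bigg(-\frac{\vartheta^2}{4ct}\bigg)
\end{align*}
uniformly in $\vartheta\in[0,\pi]$ and $0<t\le 1$. Here $\a+1/2 = (d-1)/2$ and $\b+1/2 = (d-\tilde d-1)/2$, and $\pi-\vartheta \simeq \diam\mathbb{M} - d_{\mathbb{M}}(x,y)$ (up to a fixed factor), while $\vartheta \simeq d_{\mathbb{M}}(x,y)$ and $\vartheta^2/(4ct)$ matches $d_{\mathbb{M}}^2(x,y)/(4t)$ after the time rescaling is chosen so that the exponential constant is exactly $1/4$ — which is forced anyway by Varadhan's asymptotics. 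Collecting the powers of $t$ gives $t^{-(\a+1/2)-1/2} = t^{-d/2}$, and the factor $[ct+\pi-\vartheta]^{-(d-\tilde d-1)/2}$ becomes $[t + \diam\mathbb{M} - d_{\mathbb{M}}(x,y)]^{-(d-\tilde d-1)/2}$ up to constants. This is exactly the claimed bound. For the sphere case $\tilde d = 0$ this recovers the result of \cite{NoSjSz}, and for the projective cases it is new.

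The main obstacle is establishing the addition/reduction formula, i.e. verifying that the zonal heat kernel on each $\mathbb{M}^{d,\tilde d}$ really is (a constant multiple of, and at rescaled time) the Jacobi heat kernel $G_t^{\a,\b}$ with the stated parameters. For the sphere this is the classical Gegenbauer addition formula; for the projective spaces one must use the explicit description of the spherical functions due to Cartan and the radial part of $\Delta_{\mathbb{M}}$ in geodesic polar coordinates, whose volume density $(\sin\frac{\vartheta}{2})^{?}(\cos\frac{\vartheta}{2})^{?}$ (with exponents determined by the multiplicities of the two restricted roots) produces exactly the Jacobi weight $(1-x)^\a(1+x)^\b$. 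One must also be careful that the normalized distance variable $\vartheta$ ranges over all of $[0,\pi]$, so that the full strength of Theorem~\ref{thm:jac3} (valid uniformly up to the antipodal set $\vartheta=\pi$) is what is needed — this is precisely the delicate regime near the antipodal manifold, and it is where the polynomial factor $[t+\diam\mathbb{M}-d_{\mathbb{M}}(x,y)]^{-(d-\tilde d-1)/2}$ genuinely differs from the naive Gaussian. Once the reduction is in place, no further analysis is required; all the work has been front-loaded into Section~\ref{sec:jac}.
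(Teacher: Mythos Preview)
Your proposal is correct and follows essentially the same route as the paper: rescale so that $\diam\mathbb{M}=\pi$, use the addition formula (Gin\'e/Koornwinder) to identify $K_t^{\mathbb{M}}(x,y)$ with a constant multiple of $G_t^{\a,\b}(\cos\vartheta,1)$ for $\a=d/2-1$, $\b=(d-\tilde d)/2-1$, and then apply Theorem~\ref{thm:jac3}. One small slip: the estimate you invoke is \eqref{jac:gen} specialized to $\psi=0$, not \eqref{jac:spec} (the latter is the ultraspherical case $\a=\b=\lambda$, whereas for the projective spaces $\a\neq\b$); the formula you actually wrote down, with separate exponents $-\a-1/2$ and $-\b-1/2$, is the correct one.
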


For $t \ge 1$ one has $K_t^{\mathbb{M}}(x,y) \simeq 1$, see Section \ref{sec:desc}.
We mention that in the case of the Euclidean sphere, Molchanov \cite[Example 3.1]{M} obtained the bound of Theorem
\ref{thm:main} in the antipodal situation. There also a similar result
for other compact symmetric spaces is hinted.

As a corollary, we give a genuinely sharp estimate of the derivative of the heat kernel as a function of the geodesic distance.
It is a consequence of the proof of Theorem \ref{thm:main} given below, in particular \eqref{4.X}, and of
Lemma \ref{lem:diff}, Theorem \ref{thm:jac3} and \eqref{jac:long}.
Let $\mathfrak{K}_t^{\mathbb{M}}(\varphi)$, $\varphi \in [0,\diam\mathbb{M}]$, be the function defined by
$$
K_t^{\mathbb{M}}(x,y) = \mathfrak{K}_t^{\mathbb{M}}\big(d_{\mathbb{M}}(x,y)\big), \qquad x,y \in \mathbb{M}, \quad t > 0.
$$
\begin{cor} \label{cor:main}
Let $\mathbb{M} = \mathbb{M}^{d,\tilde{d}}$ be as in Theorem \ref{thm:main}. Then
$$
-\frac{\partial}{\partial \varphi} \mathfrak{K}_t^{\mathbb{M}}(\varphi) \simeq \varphi (\diam\mathbb{M}-\varphi)
	\begin{cases}
		(t+\diam\mathbb{M}-\varphi)^{-(d+1-\tilde{d})/2} {t^{-d/2-1}} \exp\left( - \frac{\varphi^2}{4t} \right), & \;\; \textrm{if}
			\;\; t\le 1, \\
		e^{-t(d-\tilde{d}/2)}, \;\; \textrm{if} \;\; t\ge 1,
	\end{cases}
$$
uniformly in $\varphi \in [0,\diam\mathbb{M}]$ and $t > 0$.
\end{cor}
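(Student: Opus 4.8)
The plan is to differentiate the reduction identity \eqref{4.X} obtained in the proof of Theorem \ref{thm:main}, and then substitute the sharp Jacobi bounds that are already at our disposal. Recall that \eqref{4.X} identifies $\mathfrak{K}_t^{\mathbb{M}}$ with a fixed positive multiple of a Jacobi heat kernel: with the normalization $\diam\mathbb{M} = \pi$ one has $\mathfrak{K}_t^{\mathbb{M}}(\varphi) = c_{\mathbb{M}}\, G_t^{\a,\b}(\cos\varphi,1)$ for $\varphi \in [0,\diam\mathbb{M}]$ and $t>0$, where $\a = (d-2)/2 \ge -1/2$ and $\b = (d-\tilde{d}-2)/2 \ge -1/2$, and $c_{\mathbb{M}}>0$ depends only on $d$ and $\tilde{d}$; indeed, with these parameters the bound of Theorem \ref{thm:main} is exactly \eqref{jac:gen} specialized to $\psi = 0$. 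Since $\varphi \mapsto \cos\varphi$ is smooth, the chain rule together with Lemma \ref{lem:diff} (its hypotheses $\a,\b>-1$ are met, and the termwise differentiation of the spectral series needed here is already performed in the proof of that lemma) gives, uniformly in $\varphi\in[0,\pi]$ and $t>0$,
\begin{equation*}
-\frac{\partial}{\partial\varphi}\mathfrak{K}_t^{\mathbb{M}}(\varphi) = 2(\a+1)\, c_{\mathbb{M}}\, \sin\varphi\; e^{-t(\a+\b+2)}\, G_t^{\a+1,\b+1}(\cos\varphi,1).
\end{equation*}
The bookkeeping identities to keep in mind are $\a+\b+2 = d-\tilde{d}/2$, $(\a+1)+1 = d/2+1$ and $(\b+1)+1/2 = (d+1-\tilde{d})/2$.

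Next I would split according to the size of $t$. For $0<t\le 1$, since $\a+1,\b+1\ge 1/2$, Theorem \ref{thm:jac3} applies and \eqref{jac:gen} with $\psi = 0$ yields $G_t^{\a+1,\b+1}(\cos\varphi,1) \simeq t^{-(\a+1)-1/2}(t+\pi-\varphi)^{-(\b+1)-1/2}\,t^{-1/2}\exp(-\varphi^2/(4t))$. Substituting this, using the elementary comparisons $e^{-t(\a+\b+2)}\simeq 1$ on $(0,1]$ and $\sin\varphi\simeq\varphi(\pi-\varphi)$ on $[0,\pi]$, and recalling $\pi=\diam\mathbb{M}$ and the three identities above, one obtains precisely the first branch of the claimed estimate. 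For $t\ge 1$, I would instead invoke \eqref{jac:long}, i.e.\ $G_t^{\a+1,\b+1}(\cos\varphi,1)\simeq 1$, which leaves $-\partial_\varphi\mathfrak{K}_t^{\mathbb{M}}(\varphi)\simeq \varphi(\diam\mathbb{M}-\varphi)\,e^{-t(\a+\b+2)} = \varphi(\diam\mathbb{M}-\varphi)\,e^{-t(d-\tilde{d}/2)}$, the second branch. It remains to check consistency on the overlap $t\simeq 1$: there $t+\diam\mathbb{M}-\varphi\simeq 1$ and $\exp(-\varphi^2/(4t))\simeq 1$, so both branches collapse to a fixed multiple of $\varphi(\diam\mathbb{M}-\varphi)$.

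I do not expect any genuine obstacle in this corollary: all the real work is already invested in the reduction \eqref{4.X} (part of the proof of Theorem \ref{thm:main}) and in the sharp Jacobi estimate Theorem \ref{thm:jac3}. What is left --- one application of Lemma \ref{lem:diff} and the chain rule, insertion of \eqref{jac:gen} or \eqref{jac:long}, and a couple of one-line trigonometric and exponential comparisons --- is entirely routine, so the corollary amounts to a short bookkeeping computation once those two inputs are in hand.
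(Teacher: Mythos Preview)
Your proposal is correct and follows exactly the route the paper indicates: the text immediately preceding Corollary \ref{cor:main} says the result follows from \eqref{4.X}, Lemma \ref{lem:diff}, Theorem \ref{thm:jac3} and \eqref{jac:long}, and that is precisely the sequence of inputs you assemble. Your bookkeeping on the exponents and the two time regimes is accurate.
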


\begin{proof}[{Proof of Theorem \ref{thm:main}}]
We first scale the Riemannian structure on $\mathbb{M}$ in such a way that all lengths are multiplied by the fixed factor
$\pi\slash \diam\mathbb{M}$. In particular, the diameter of $\mathbb{M}$ will now be $\pi$. After this scaling, some
objects will be marked by the symbol $\,\breve{}\,\,$, for instance the distance
$\breve{d}_{\mathbb{M}}(x,y) = (\pi\slash \diam\mathbb{M})\, d_{\mathbb{M}}(x,y)$.
The Laplace-Beltrami operator will then be replaced by
$\breve{\Delta}_{\mathbb{M}} =(\pi\slash \diam\mathbb{M})^{-2}\Delta_{\mathbb{M}}$.
In the heat equation and the heat kernel, the variable $t$ must be multiplied by $(\pi\slash \diam\mathbb{M})^2$,
and so the quantity $d^2_{\mathbb{M}}(x,y)/(4t)$ remains invariant.
The Riemannian volume measure $d\omega$ should also be scaled accordingly, but we prefer to scale it in such a way that we get
a probability measure $d\sigma$ on $\mathbb{M}$.

After this scaling, the heat kernel $\breve{K}_t^{\mathbb{M}}$ is defined by
$$
\exp(t\breve{\Delta}_{\mathbb{M}})f(x) = \int_{\mathbb{M}} \breve{K}_t^{\mathbb{M}}(x,y) f(y)\, d\sigma(y),
	\qquad x \in \mathbb{M}, \quad t > 0.
$$
It follows that
$$
\breve{K}_t^{\mathbb{M}}(x,y) = \omega(\mathbb{M})\, K^{\mathbb{M}}_{(\pi\slash \diam\mathbb{M})^{-2}t}\,(x,y).
$$
The reason for this scaling is that it connects the heat kernel in $\mathbb{M}$ with the Jacobi heat kernel
$G_t^{\a,\b}$ treated in the preceding section, as we now show.

Let first
$$
\a = \a\Big(\mathbb{M}^{d,\tilde{d}\,}\Big) = \frac{d}2-1, \qquad 
\b = \b\Big(\mathbb{M}^{d,\tilde{d}\,}\Big) = \frac{d-\tilde{d}}2-1.
$$
The spectrum of $-\breve{\Delta}_{\mathbb{M}}$ consists of the numbers
$\lambda_n^{\a,\b} = n(n+\a+\b+1)$, $n=0,1,2,\ldots$; see for instance \cite{Sch}.
The corresponding eigenspaces are finite dimensional.
More precisely, there exists an orthonormal basis of $L^2(\mathbb{M},d\sigma)$, namely
$S_{n,k}^{\mathbb{M}}$, $k=1, \ldots, \delta(n,\mathbb{M})$, $n=0,1,2,\ldots$,
consisting of the so-called ``spherical polynomials'' satisfying
$$
-\breve{\Delta}_{\mathbb{M}} S_{n,k}^{\mathbb{M}} = \lambda_n^{\a,\b} S_{n,k}^{\mathbb{M}}. 
$$
Here the dimensions $\delta(n,\mathbb{M})$ are known explicitly (see e.g.\ \cite{Sch}),
but this will not be needed for our purposes.
The self-adjoint operator $-\breve{\Delta}_{\mathbb{M}}$ can be characterized as
$$
-\breve{\Delta}_{\mathbb{M}}f = \sum_{n=0}^{\infty} \lambda_n^{\a,\b} \sum_{k=1}^{\delta(n,\mathbb{M})}
	\big\langle f, S_{n,k}^{\mathbb{M}} \big\rangle_{d\sigma} S_{n,k}^{\mathbb{M}},
$$
the domain consisting of those $f\in L^2(\mathbb{M},d\sigma)$ for which the series is $L^2$-convergent;
here $\langle \cdot,\cdot \rangle_{d\sigma}$ is the standard inner product in $L^2(\mathbb{M},d\sigma)$.
The heat kernel is now given by
$$
\breve{K}_t^{\mathbb{M}}(x,y) = \sum_{n=0}^{\infty} e^{-t\lambda_n^{\a,\b}} \sum_{k=1}^{\delta(n,\mathbb{M})}
	S_{n,k}^{\mathbb{M}}(x) \overline{S_{n,k}^{\mathbb{M}}(y)}, \qquad x,y \in \mathbb{M}, \quad t > 0.
$$

To find a more useful expression for $\breve{K}_t^{\mathbb{M}}$, we shall use the addition formula
\begin{equation} \label{add}
\sum_{k=1}^{\delta(n,\mathbb{M})} S_{n,k}^{\mathbb{M}}(x) \overline{S_{n,k}^{\mathbb{M}}(y)}
	= c_n^{\a,\b} P_n^{\a,\b}\big( \cos d_{\mathbb{M}}(x,y) \big), \qquad x,y \in \mathbb{M}, \quad n=0,1,2,\ldots,
\end{equation}
where
$$
c_n^{\a,\b} = \frac{\Gamma(\b+1)(2n+\a+\b+1)\Gamma(n+\a+\b+1)}{\Gamma(\a+\b+2)\Gamma(n+\b+1)},
$$
with the same understanding of the case $n=0=\a+\b+1$ as in \eqref{h_for}.
This is justified by a result of Gin\'e; see \cite{Gi,Ko} and also \cite{BKLT,BrDa}.
The Jacobi polynomials appear in this context since the so-called radial part of $\breve{\Delta}_{\mathbb{M}}$ is the
Jacobi differential operator, whose eigenfunctions are $P_n^{\a,\b} \circ \cos$.
Combining \eqref{add} with \eqref{h_for}, \eqref{P1_for} and \eqref{iden3}, one finds that
\begin{align} \nonumber
\breve{K}_t^{\mathbb{M}}(x,y) & = \sum_{n=0}^{\infty} c_n^{\a,\b} e^{-t\lambda_n^{\a,\b}}
	P_n^{\a,\b}\big(\cos \breve{d}_{\mathbb{M}}(x,y) \big) \\
& = \big\|P_0^{\a,\b}\big\|^2_{2,(\a,\b)} \sum_{n=0}^{\infty} e^{-t\lambda_n^{\a,\b}}
	\frac{P_n^{\a,\b}\big(\cos \breve{d}_{\mathbb{M}}(x,y) \big) P_n^{\a,\b}(1)}{h_n^{\a,\b}} \label{4.X} \\
& = \rho_{\a,\b}\big([-1,1]\big) \, G_t^{\a,\b}\big( \cos \breve{d}_{\mathbb{M}}(x,y), 1\big). \nonumber
\end{align}
Notice that this connection would have an even simpler form if we used the normalized probability measure in the Jacobi
context (however, we did not do so for the sake of compatibility with \cite{NoSj} and other papers).

Theorem \ref{thm:jac3} and \eqref{jac:long} now imply that for any $T>0$
$$
\breve{K}_t^{\mathbb{M}}(x,y) \simeq \big[t+\pi-\breve{d}_{\mathbb{M}}(x,y)\big]^{-(d-\tilde{d}-1)/2}
	\frac{1}{t^{d/2}} \exp\bigg(-\frac{{\breve{d}}^{\,2}_{\mathbb{M}}(x,y)}{4t}\bigg),
$$
uniformly in $x,y \in \mathbb{M}$ and $0 < t \le T$.
From this, Theorem \ref{thm:main} follows.
\end{proof}

%%%%%%%%%%%%%%%%%%%%%%%%%%%%%%%%%%%%%%%%%%%%%%%%%%%%%%%%%%%%%%%%%%%%%%%%%%%%%%%%%%%%
\section{Heat kernel estimates on the unit ball} \label{sec:ball}
%%%%%%%%%%%%%%%%%%%%%%%%%%%%%%%%%%%%%%%%%%%%%%%%%%%%%%%%%%%%%%%%%%%%%%%%%%%%%%%%%%%%%

Let $\mu > -1/2$.
Denote by $|\cdot|$ the Euclidean norm in $\R^d$, $d \ge 2$, and let $B^d=\{x \in \R^d : |x| \le 1\}$
be the closed unit ball equipped with the measure $\W$ given by
\begin{align*}
\W (x) = \big(1-|x|^2\big)^{\mu - 1/2} \, dx.
\end{align*}
We consider the second-order differential operator
\begin{align*}
\eL f = - \Delta f 
+ 
\sum_{i,j=1}^d x_i x_j \frac{\partial^2 f}{ \partial x_i \, \partial x_j }
+
(2\mu + d) \sum_{i=1}^d x_i \frac{\partial f}{ \partial x_i },
\end{align*}
acting initially on polynomials in $B^d$. It is known that $\eL$ is symmetric, non-negative and essentially self-adjoint
in $L^2(\W)$, see e.g.\ \cite{DaiXu} or \cite{SjSz}. We denote by the same symbol the self-adjoint extension of $\eL$.

The associated heat semigroup $\exp(-t\eL)$ has an integral representation
\begin{equation*} 
\exp(-t\eL) f (x)
= \int_{B^d} h_t^\mu (x,y) f(y) \, \W(y), 
\qquad x \in B^d, \quad t > 0, \quad f \in L^2(\W),  
\end{equation*}
where the heat kernel $h_t^\mu (x,y)$ can be expressed as (see \cite[Corollary 11.1.8]{DaiXu} or \cite[(2.5)]{KPX2}) 
\begin{align*}
h_t^\mu (x,y)
=
\sum_{n=0}^{\8} e^{-tn( n + 2 \lambda_\mu )} P_n(W_{\mu}, x , y),
\qquad x, y \in B^d, \quad t>0.
\end{align*}
Here $\lambda_\mu = \mu + (d-1)/2$ and $P_n(W_{\mu}, x , y)$ is the reproducing kernel of the space of orthogonal
polynomials of degree $n$ with respect to $\W$. Under the restriction $\mu \ge 0$ this kernel can be expressed as 
\begin{align*}
P_n(W_{\mu}, x , y)
=
\frac{1}{W_{\mu} (B^d)} \frac{n + \lambda_\mu}{ \lambda_\mu }
\int C_n^{ \lambda_\mu } \Big(\langle x, y \rangle + u\sqrt{1-|x|^2} \sqrt{1-|y|^2} \Big)\, \pimu,
\end{align*}
$\langle \cdot,\cdot\rangle$ denoting the scalar product in $\mathbb{R}^d$.
See again \cite{DaiXu} or \cite{KPX2}, where the normalization is slightly different.
Here $d\Pi_{\mu - 1/2}$ is defined as in \eqref{Pimeas},
and $C_n^\lambda$ is the classical Gegenbauer (ultraspherical) polynomial of degree $n$ given by (cf.\ \cite{Sz})
\begin{align*}
C_n^\lambda (x) = \frac{\Gamma (2\lambda + n) \Gamma (\lambda + 1/2)}{ \Gamma (2\lambda) \Gamma (\lambda + n + 1/2) }
P_n^{\lambda - 1/2, \lambda - 1/2 } (x),
\qquad 
0 \neq \lambda > -1/2.
\end{align*}
Combining the above identities with \eqref{iden3} leads us to the following representation of $h_t^\mu (x,y)$,
see also \cite[(2.9)]{KPX2},
\begin{align} \label{iden1}
h_t^\mu (x,y) & =
\frac{2^{2\lambda_\mu} \Gamma (\lambda_\mu + 1/2)^2 }{ \Gamma (2\lambda_\mu + 1) W_{\mu} (B^d)}  \\ 
& \qquad \times
\int G_t^{ \lambda_\mu - 1/2, \lambda_\mu - 1/2} 
\Big( \langle x, y \rangle + u\sqrt{1-|x|^2} \sqrt{1-|y|^2}, 1 \Big) \, \pimu; \nonumber
\end{align}
here $x,y \in B^d$ and $t>0$.

The distance we use in this context is
\begin{align*}
\distB(x,y) = \arccos \Big( \langle x, y \rangle + \sqrt{1-|x|^2} \sqrt{1-|y|^2} \Big),
\qquad x,y \in B^d. 
\end{align*}
Observe that $\distB(x,y)$ coincides with the geodesic distance between the points $\big(x, \sqrt{1-|x|^2}\, \big)$ and
$\big(y, \sqrt{1-|y|^2}\, \big)$ on the unit sphere in $\R^{d+1}$. Thus $\diam B^d = \max_{x,y \in B^d} \distB(x,y) = \pi$.
Notice also that $x$ and $y$ are antipodes in the sense that $d_{B}(x,y)=\pi$ if and only if $|x|=1$ and $y=-x$.

\begin{thm} \label{thm:heatball}
Let $d\ge 2$ and $\mu \ge 0$ be fixed. Then 
\begin{align} \label{iden2}
h_t^\mu (x,y)
& \simeq
\big[t + \pi - \distB(x,y) \big]^{ - \lambda_\mu } 
\bigg( t + \frac{\sqrt{1 - |x|^2} \sqrt{1 - |y|^2} }{ \pi - \distB(x,y)  } \bigg)^{-\mu} 
\frac{1}{t^{d/2}}
\exp\bigg( - \frac{\distB^{\,2}(x,y)}{4t} \bigg),
\end{align}
uniformly in $x,y \in B^d$ and $0 < t \le 1$.
Here the fraction $\sqrt{1-|x|^2}\sqrt{1-|y|^2}/(\pi-d_B(x,y))$ is extended to a continuous function on
$B^d \times B^d$ with the value $0$ at antipodal points.
\end{thm}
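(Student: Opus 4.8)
The plan is to read off the estimate from the integral representation \eqref{iden1}, feeding into it the genuinely sharp Jacobi bound of Theorem~\ref{thm:jac3} and then evaluating the resulting one-variable integral by Lemma~\ref{lem:FVII}. Fix $x,y\in B^d$ and $0<t\le 1$, and abbreviate $\varphi(u)=\arccos\big(\langle x,y\rangle+u\sqrt{1-|x|^2}\sqrt{1-|y|^2}\big)$ for $u\in[-1,1]$; since $\big(x,\sqrt{1-|x|^2}\big)$ and $\big(y,\pm\sqrt{1-|y|^2}\big)$ are unit vectors in $\R^{d+1}$, the argument of the $\arccos$ lies in $[-1,1]$, so $\varphi(u)\in[0,\pi]$, and $\varphi(1)=\distB(x,y)$. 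Because $\mu\ge 0$ and $d\ge 2$ we have $\lambda_\mu-1/2=\mu+(d-1)/2-1/2\ge 0$, so \eqref{jac:spec} is available with $\lambda=\lambda_\mu-1/2$ (note $\lambda+1/2=\lambda_\mu$). First I would substitute \eqref{jac:spec} into \eqref{iden1} and absorb the fixed constant, obtaining
$$
h_t^\mu(x,y)\simeq t^{-\lambda_\mu-1/2}\int_{[-1,1]}\big[t+\pi-\varphi(u)\big]^{-\lambda_\mu}\exp\!\Big(-\frac{\varphi^2(u)}{4t}\Big)\,d\Pi_{\mu-1/2}(u).
$$

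Next I would show that the main contribution to this integral comes from $u\in[0,1]$. Reflecting $u\mapsto-u$ and using the symmetry of $d\Pi_{\mu-1/2}$ reduces this to the monotonicity inequality \eqref{estZ}, applied with $\gamma=-\lambda_\mu$, $\tau=\varphi(u)$ and $\theta=\varphi(-u)$, where $\varphi(-u)\ge\varphi(u)$ for $u\in[0,1]$ since $\arccos$ is decreasing. (When $\sqrt{1-|x|^2}\sqrt{1-|y|^2}=0$ the integrand does not depend on $u$ and there is nothing to do; the genuinely degenerate case is the antipodal one $|x|=1$, $y=-x$, where $\varphi\equiv\pi$ and the auxiliary fraction in \eqref{iden2} is to be read as $0$, consistently with Lemma~\ref{lem:FVII}.) Hence the integral over $[-1,1]$ is comparable with the one over $[0,1]$.

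Finally I would apply Lemma~\ref{lem:FVII} to the integral over $[0,1]$, with $\nu=\mu-1/2$ (so $\nu\ge-1/2$ and $\nu+1/2=\mu$), $\xi=\lambda_\mu$, $A=\langle x,y\rangle$, $B=\sqrt{1-|x|^2}\sqrt{1-|y|^2}$ and $D=4t$. The hypotheses $0\le B\le1$ and $-1\le A\le1-B$ hold: the first is clear, $A\ge-1$ by Cauchy--Schwarz, and $A+B\le1$ because $A+B$ is the scalar product of two unit vectors in $\R^{d+1}$. Here $\Phi_{A,B}(w)=\varphi(w)$ and $\Phi_{A,B}(1)=\distB(x,y)$; using also that $t+s\simeq 4t+s$ uniformly in $s\ge0$, Lemma~\ref{lem:FVII} gives
$$
\int_{[0,1]}\big[t+\pi-\varphi(u)\big]^{-\lambda_\mu}\exp\!\Big(-\frac{\varphi^2(u)}{4t}\Big)\,d\Pi_{\mu-1/2}(u)\simeq t^{\mu}\big[t+\pi-\distB(x,y)\big]^{-\lambda_\mu}\Big(t+\frac{\sqrt{1-|x|^2}\sqrt{1-|y|^2}}{\pi-\distB(x,y)}\Big)^{-\mu}\exp\!\Big(-\frac{\distB^{\,2}(x,y)}{4t}\Big).
$$
Multiplying by $t^{-\lambda_\mu-1/2}$ and using $-\lambda_\mu-1/2+\mu=-d/2$ then produces exactly \eqref{iden2}, uniformly in $x,y\in B^d$ and $0<t\le1$.

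Given the two heavy inputs already at hand (Theorem~\ref{thm:jac3} and Lemma~\ref{lem:FVII}), no step should be genuinely difficult; the points demanding the most care are verifying that the parameters fall in the admissible ranges of Lemma~\ref{lem:FVII} --- in essence the geometric inequality $\langle x,y\rangle+\sqrt{1-|x|^2}\sqrt{1-|y|^2}\le1$ --- and keeping track of the $0/0$ behaviour of the fraction at antipodal pairs, which is precisely what forces the continuous extension asserted in the statement.
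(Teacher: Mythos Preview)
Your proposal is correct and follows essentially the same route as the paper: plug \eqref{jac:spec} with $\lambda=\lambda_\mu-1/2$ into \eqref{iden1}, reduce to $[0,1]$ via the reflection $u\mapsto-u$ and the monotonicity bound \eqref{estZ}, and then apply Lemma~\ref{lem:FVII} with $\nu=\mu-1/2$, $\xi=\lambda_\mu$, $A=\langle x,y\rangle$, $B=\sqrt{1-|x|^2}\sqrt{1-|y|^2}$, $D=4t$. The only point the paper makes more explicit is the continuity claim for the auxiliary fraction at antipodal pairs, which it justifies by the elementary estimate $\pi-\distB(x,y)\simeq(1+\langle x,y\rangle+\sqrt{1-|x|^2}\sqrt{1-|y|^2})^{1/2}$, giving $\sqrt{1-|x|^2}\sqrt{1-|y|^2}/(\pi-\distB(x,y))\lesssim(1-|x|^2)^{1/4}(1-|y|^2)^{1/4}$; you might add this one line.
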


We exclude $d=1$ in Theorem~\ref{thm:heatball}, since in this case our present setting is a special case of the Jacobi
context considered in Section \ref{sec:jac}. 
Note also that for all $\mu > -1/2$,
\begin{align*}
h_t^\mu (x,y) \simeq 1,
\qquad x,y \in B^d, \quad t \ge 1,
\end{align*}
which follows from the Gaussian bounds for $h_t^\mu (x,y)$ obtained in \cite{KPX1,KPX2,SjSz}.
When $\mu \ge 0$ this also follows from \eqref{iden1} and \eqref{jac:long}.

\begin{proof}[Proof of Theorem~\ref{thm:heatball}]
We first observe that the claimed continuity follows from the estimate
$$
\frac{\sqrt{1 - |x|^2} \sqrt{1 - |y|^2} }{\pi - \distB(x,y)} \simeq
\frac{\sqrt{1 - |x|^2} \sqrt{1 - |y|^2} }{(1+ \langle x,y\rangle + \sqrt{1 - |x|^2} \sqrt{1 - |y|^2})^{1/2}}
\le \big(1-|x|^2\big)^{1/4} \big(1-|y|^2\big)^{1/4}.
$$

Notice that Theorem \ref{thm:jac3} implies \eqref{jac:spec} with $\lambda = \lambda_\mu -1/2$.
Combining this with \eqref{iden1} we obtain
\begin{align*}
h_t^\mu (x,y) 
\simeq 
t^{-\lambda_\mu - 1/2}
\int \big( t + \pi - f(u,x,y) \big)^{-\lambda_\mu}
\exp\bigg( -\frac{f^2(u,x,y)}{4t} \bigg) \, \pimu,
\end{align*}
uniformly in $x,y \in B^d$ and $0 < t \le 1$;
here $f(u,x,y) = \arccos \big( \langle x, y \rangle + u\sqrt{1-|x|^2} \sqrt{1-|y|^2}\,\big)$.

We first show that the main contribution in the last integral comes from integrating over $[0,1]$.
This can be seen by reflecting $u \mapsto -u$ and using \eqref{estZ}
with $\gamma = - \lambda_{\mu}$, $\t = f(-u,x,y)$ and $\tau = f(u,x,y)$, for $u \in [0,1]$, $x,y \in B^d$.
Thus,
\begin{align*}
h_t^\mu (x,y) 
\simeq 
t^{-\lambda_\mu - 1/2}
\int_{[0,1]} \big( t + \pi - f(u,x,y) \big)^{-\lambda_\mu}
\exp\bigg( -\frac{f^2(u,x,y)}{4t} \bigg) \, \pimu,
\end{align*}
uniformly in $x,y \in B^d$ and $0 < t \le 1$.
Now applying Lemma~\ref{lem:FVII} (with $\nu = \mu-1/2$, $\xi = \lambda_\mu$,
$A = \langle x, y \rangle$, $B = \sqrt{1-|x|^2} \sqrt{1-|y|^2}$ and $D = 4t$)
and using the fact that $\lambda_\mu = \mu + (d-1)/2$, we get \eqref{iden2}.
\end{proof}

%%%%%%%%%%%%%%%%%%%%%%%%%%%%%%%%%%%%%%%%%%%%%%%%%%%%%%%%%%%%%%%%%%%%%%%%%%%%%%%%%%%%
\section{Heat kernel estimates on the simplex} \label{sec:simplex}
%%%%%%%%%%%%%%%%%%%%%%%%%%%%%%%%%%%%%%%%%%%%%%%%%%%%%%%%%%%%%%%%%%%%%%%%%%%%%%%%%%%%

Let $d \ge 2$, and let $\kappa = (\kappa_1, \ldots, \kappa_{d+1}) \in [0,\infty)^{d+1}$ be a multi-parameter.
We will write $|\kappa|$ for the length of $\kappa$ (sum of coordinates).
Denote by $\mathbb{V}^d$ the unit simplex in $\R^d$,
\begin{align*}
\mathbb{V}^d = \bigg\{ x \in \R^d : x_j \ge 0,\; j =1, \ldots, d,  \textrm{\, and \, } \sum_{j=1}^d x_j \le 1 \bigg\}.
\end{align*} 
Equip $\mathbb{V}^d$ with the measure
\begin{align*}
dU_{\kappa} (x) = \prod_{j=1}^{d+1} x_j^{\kappa_j - 1/2} \, dx_1\ldots dx_d;
\end{align*}
here and later on we denote $x_{d+1} = 1-|x|_1$,
where $|x|_1 = x_1+\ldots + x_d$. 

Consider the second-order differential operator
\begin{align*}
\mathbb{L}_{\kappa} f 
= - \sum_{j=1}^d x_j \frac{\partial^2 f}{ \partial x_j^2 }  
+ 
\sum_{i,j=1}^d x_i x_j \frac{\partial^2 f}{ \partial x_i \, \partial x_j }
-
\sum_{j=1}^d \Big( \kappa_j + 1/2 - \big( |\kappa| + (d+1)/2 \big) x_j \Big)  \frac{\partial f}{ \partial x_j },
\end{align*}
acting initially on polynomials in $\mathbb{V}^d$. It is known that $\mathbb{L}_{\kappa}$ is symmetric, non-negative and essentially
self-adjoint in $L^2(dU_{\kappa})$; see \cite[Proposition 3.1]{KPX2}. We denote by the same symbol the self-adjoint extension 
of $\mathbb{L}_{\kappa}$.

The associated heat semigroup $\exp(-t\mathbb{L}_{\kappa})$ has an integral representation
\begin{equation*} 
\exp(-t \mathbb{L}_{\kappa}) f (x)
= \int_{\mathbb{V}^d} H_t^{\kappa} (x,y) f(y) \, dU_{\kappa} (y), 
\qquad x \in \mathbb{V}^d, \quad t > 0, 
\end{equation*}
where the heat kernel 
is given by
\begin{align*}
H_t^{\kappa} (x,y)
=
\sum_{n=0}^{\8} e^{-tn( n + \lambda_\kappa )} P_n(U_{\kappa} , x , y),
\qquad x, y \in \mathbb{V}^d, \quad t>0.
\end{align*}
Here $\lambda_\kappa = |\kappa|+ (d-1)/2$, and $P_n( U_{\kappa}, x , y)$ is the reproducing kernel of the space of orthogonal
polynomials of degree $n$ with respect to $dU_{\kappa}$. By means of \cite[Corollary 13.1.6]{DaiXu}
(note that our normalization differs from that in \cite{DaiXu}), we get
\begin{align*}
P_n(U_{\kappa} , x , y)
=
\frac{p_n^{\lambda_{\kappa} - 1/2, -1/2} (1)}{U_{\kappa} (\mathbb{V}^d)}
\int_{[-1,1]^{d+1}} p_n^{\lambda_{\kappa} - 1/2, -1/2} \big( 2 z^2(u,x,y) - 1 \big) \, \piku
\end{align*}
for $x, y \in \mathbb{V}^d$ and $n \in \N$. 
Here $\mathbf{1}=(1,\ldots,1) \in \mathbb{R}^{d+1}$, and $d\Pi_{\kappa - \mathbf{1}/2}$ is a tensor product of
one-dimensional measures defined in \eqref{Pimeas}. Further, $p_n^{\a,\b}$ are re-normalized Jacobi polynomials,
\begin{align*}
p_n^{\a,\b} (x) = \bigg( 
\frac{2^{\a + \b + 1}\Gamma(\a+1) \Gamma(\b+1) }{h_n^{\a,\b} \Gamma(\a+\b+2) } \bigg)^{1/2} 
P_n^{\a,\b} (x)
\end{align*}
and
\begin{align*}
z(u,x,y) = \sum_{j=1}^{d+1} u_j\sqrt{x_j y_j}.
\end{align*}
Combining the above formulas with \eqref{iden3}, we get, see also \cite[(3.7)]{KPX2},
\begin{align*} 
H_t^{\kappa} (x,y) 
=
\frac{\sqrt{\pi}\,2^{\lambda_\kappa} \Gamma (\lambda_\kappa + 1/2)}
{ \prod_{j=1}^{d+1} \Gamma (\kappa_j + 1/2) }
\int_{[-1,1]^{d+1}} G_t^{ \lambda_\kappa - 1/2, - 1/2} 
\big( 2 z^2(u,x,y) - 1, 1 \big)\, \piku,
\end{align*}
for $x,y \in \mathbb{V}^d$ and $t>0$.
By means of the first identity of Lemma \ref{lem:qiden},
we arrive from here at the following nice expression for the simplex heat kernel:
\begin{equation} \label{fhs}
H_t^\kappa (x,y)
= C_{d,\kappa} 
\int_{[-1,1]^{d+1}} G_{t/4}^{ \lambda_\kappa - 1/2, \lambda_\kappa - 1/2} 
\big( z(u,x,y) , 1 \big) \, \piku, \qquad x,y \in \mathbb{V}^d, \quad t > 0.
\end{equation}

To state our bounds for $H_t^\kappa$, we recall
the relevant distance on the simplex, cf.\ \cite{DaiXu}, 
\begin{align*}
d_{\mathbb{V}} (x,y) = \arccos \big( z(\mathbf{1},x,y)\big),
\qquad x,y \in \mathbb{V}^d.
\end{align*}
As easily verified, $d_{\mathbb{V}}$ can be obtained from the geodesic distance on the unit sphere
via the bijection $(x_j)_{j=1}^{d}\mapsto (\sqrt{x_j})_{j=1}^d$
from $\mathbb{V}^d$ to the subset $\{x \in B^d : x_j \ge 0, \, j=1, \ldots, d\}$ of $B^d$.

\begin{thm} \label{thm:heatsim}
Let $d\ge 2$ and let $\kappa \in [0,\infty)^{d+1}$ be fixed. 
Then
\begin{equation} \label{iden200} 
H_t^\kappa (x,y)
\simeq
\bigg( \prod_{j=1}^d \big( t + \sqrt{x_j y_j}\big)^{-\kappa_j}\bigg) \Big( t + \sqrt{(1-|x|_1)(1-|y|_1)}\Big)^{-\kappa_{d+1}}
	\frac{1}{t^{d/2}} \exp\bigg( -\frac{d_{\mathbb{V}}^{\,2}(x,y)}t\bigg),
\end{equation} 
uniformly in $x,y \in \mathbb{V}^d$ and $0 < t \le 1$.
\end{thm}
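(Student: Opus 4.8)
The plan is to follow the scheme of the proof of Theorem~\ref{thm:heatball}, now collapsing the $(d+1)$-fold integral in \eqref{fhs} by iterating Lemma~\ref{lem:FVII} once per coordinate. First I would invoke Theorem~\ref{thm:jac3}, which makes \eqref{jac:spec} available with $\lambda = \lambda_\kappa - 1/2 \ge 0$; inserting it (with time $t/4$) into \eqref{fhs} and absorbing fixed constants gives, for $0 < t \le 1$,
\[
H_t^\kappa(x,y) \simeq t^{-\lambda_\kappa - 1/2} \int_{[-1,1]^{d+1}} \big( t + \pi - f(u,x,y) \big)^{-\lambda_\kappa} \exp\big( -f^2(u,x,y)/t \big)\, \piku,
\]
where $f(u,x,y) = \arccos\big( z(u,x,y) \big)$ and $z$ is as in Section~\ref{sec:simplex}.

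Next I would restrict the integration to the positive orthant $[0,1]^{d+1}$. Reflecting a coordinate $u_j \mapsto -u_j$ can only decrease $z(u,x,y)$, since $\sqrt{x_j y_j} \ge 0$, hence can only increase $f(u,x,y)$; combining the symmetry of each one-dimensional factor $d\Pi_{\kappa_j - 1/2}$ with \eqref{estZ} (used with $\gamma = -\lambda_\kappa$ and $t$ replaced by $t/4$) then shows that the part over $[0,1]^{d+1}$ controls the whole integral from both sides. On $[0,1]^{d+1}$ one has $z(u,x,y) \ge 0$, so $f(u,x,y) \in [0,\pi/2]$ and therefore $t + \pi - f(u,x,y) \simeq 1$; this eliminates the polynomial factor and leaves
\[
H_t^\kappa(x,y) \simeq t^{-\lambda_\kappa - 1/2} \int_{[0,1]^{d+1}} \exp\big( -f^2(u,x,y)/t \big)\, \piku .
\]

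To evaluate this last integral I would apply Lemma~\ref{lem:FVII} successively in the variables $u_{d+1}, u_d, \dots, u_1$, each time with $\xi = 0$ and $D = t$. At the step integrating $u_j$ one takes $\nu = \kappa_j - 1/2$, $B = \sqrt{x_j y_j}$, and $A$ equal to $z(u,x,y)$ with $u_j$ deleted and all already-integrated variables set to $1$; the hypotheses $0 \le B \le 1$ and $-1 \le A \le 1 - B$ hold because $u_\ell \in [0,1]$ and, crucially, $\sum_{j=1}^{d+1} \sqrt{x_j y_j} \le \sum_{j=1}^{d+1} (x_j + y_j)/2 = 1$. After this step the new angle $\arccos(A+B)$ again lies in $[0,\pi/2]$, so the factor $\big( B(\pi - \arccos(A+B))^{-1} + t \big)^{-\nu - 1/2}$ produced by the lemma equals $\big( t + \sqrt{x_j y_j} \big)^{-\kappa_j}$ up to constants and, no longer involving the remaining variables, can be pulled out of the remaining integrals. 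Collecting the resulting $t^{\kappa_j}$ and $\big( t + \sqrt{x_j y_j} \big)^{-\kappa_j}$ over $j = 1,\dots,d+1$ and noting that the final angle is $\arccos\big( z(\mathbf{1},x,y) \big) = d_{\mathbb{V}}(x,y)$, one gets
\[
\int_{[0,1]^{d+1}} \exp\big( -f^2(u,x,y)/t \big)\, \piku \simeq t^{|\kappa|} \prod_{j=1}^{d+1} \big( t + \sqrt{x_j y_j} \big)^{-\kappa_j} \exp\big( -d_{\mathbb{V}}^{\,2}(x,y)/t \big) .
\]
Multiplying by $t^{-\lambda_\kappa - 1/2}$ and using $\lambda_\kappa + 1/2 = |\kappa| + d/2$ together with $x_{d+1} = 1 - |x|_1$ and $y_{d+1} = 1 - |y|_1$ then yields exactly \eqref{iden200}.

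The step demanding the most care is the bookkeeping in this iteration: one must verify the hypothesis $A \le 1 - B$ of Lemma~\ref{lem:FVII} at every stage — which is precisely where the geometry of the simplex enters, through $\sum_j \sqrt{x_j y_j} \le 1$ — and keep track of the fact that all intermediate angles stay in $[0,\pi/2]$, since this is what makes the successive one-dimensional integrals decouple. The remaining manipulations are routine.
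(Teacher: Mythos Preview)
Your proposal is correct and follows essentially the same route as the paper: insert \eqref{jac:spec} into \eqref{fhs}, reduce to the positive orthant via reflections and \eqref{estZ}, drop the factor $(t+\pi-f)^{-\lambda_\kappa}$ since $f\in[0,\pi/2]$ there, and then iterate Lemma~\ref{lem:FVII} with $\xi=0$ coordinate by coordinate. The only cosmetic difference is the order of integration (you take $u_{d+1}$ first, the paper takes $u_1$ first), and you make explicit the verification $A\le 1-B$ via $\sum_j\sqrt{x_jy_j}\le 1$, which the paper leaves implicit.
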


We remark that in the special case $\kappa = (0,\ldots,0,\mu)$ the measure $dU_{\kappa}$ corresponds to $2^d \W$
under the bijection described before Theorem \ref{thm:heatsim}, and $\mathbb{L}_{\kappa}$ corresponds to $\eL/4$.
This is why the denominator in the exponential factor in \eqref{iden200} is $t$ and not $4t$.

For large $t$ and all $\kappa \in (-1/2,\infty)^{d+1}$, one has
$$
H_t^\kappa (x,y) \simeq 1, \qquad x,y \in \mathbb{V}^d, \quad t \ge 1,
$$
which follows from the Gaussian bounds for $H_t^\kappa (x,y)$ obtained in \cite{KPX1,KPX2}.
When $\kappa \in [0,\infty)^{d+1}$, this can also easily be deduced from \eqref{fhs} and \eqref{jac:long}.

\begin{proof}[Proof of Theorem~\ref{thm:heatsim}]
Plugging into \eqref{fhs} the estimate \eqref{jac:spec} with $\lambda=\lambda_{\kappa}-1/2=|\kappa|+d/2-1$, we get
$$
H_t^{\kappa}(x,y) \simeq t^{-\lambda_{\kappa}-1/2} \int_{[-1,1]^{d+1}} \big[ t+\pi -f(u,x,y)\big]^{-\lambda_{\kappa}}
	\exp\bigg( -\frac{f^2(u,x,y)}{t} \bigg)\, d\Pi_{\kappa-\mathbf{1}/2}(u),
$$
uniformly in $x,y \in \mathbb{V}^d$ and $0 < t \le 1$; here $f(u,x,y) = \arccos ( z(u,x,y) )$.

We claim that the main contribution to the last integral comes from integrating over the subcube $[0,1]^{d+1}$.
Indeed, let $G$ be the finite reflection group generated by reflections of $\mathbb{R}^{d+1}$ in the hyperplanes
$u_j = 0$, $j=1, \ldots, d+1$.
The claim follows from the $G$-invariance of the measure $d\Pi_{\kappa-\mathbf{1}/2}$
and of the set of integration, and from the bound \eqref{estZ} specified to 
$\gamma = -\lambda_{\kappa}$, $\t = f(\sigma u,x,y)$ and $\tau = f(u,x,y)$ 
with $\sigma \in G$, $u \in [0,1]^{d+1}$, $x,y \in \mathbb{V}^d$ and $t>0$;
notice that $f(u,x,y) \in [0,\pi/2]$ and $0 \le f(u,x,y) \le f(\sigma u,x,y) \le \pi$ when $u \in [0,1]^{d+1}$.

We then have
$$
H^{\kappa}_t(x,y) \simeq t^{-\lambda_{\kappa}-1/2} \int_{[0,1]^{d+1}}
	\exp\bigg( -\frac{f^2(u,x,y)}{t} \bigg)\, d\Pi_{\kappa-\mathbf{1}/2}(u),
$$
uniformly in $x,y \in \mathbb{V}^d$ and $0 < t \le 1$. Now we iterate the above integral and apply Lemma~\ref{lem:FVII}
$d+1$ times. Integrating first with respect to $d\Pi_{\kappa_1-1/2}(u_1)$, we apply Lemma \ref{lem:FVII} with
$\nu = \kappa_1-1/2$, $\xi=0$, $A=\sum_{j=2}^{d+1} u_j \sqrt{x_j y_j}$, $B = \sqrt{x_1 y_1}$ and $D=t$, getting
\begin{align*}
H_t^{\kappa}(x,y) & \simeq t^{-\lambda_{\kappa}-1/2}t^{\kappa_1} \big( t+ \sqrt{x_1 y_1}\big)^{-\kappa_1} \\ & \qquad \times
	\int_{[0,1]^d} \exp\Bigg( -\frac{f^2\big((1,u_2,\ldots,u_{d+1}),x,y\big)}{t} \Bigg)\, d\Pi_{\kappa_2-1/2}(u_2)\ldots
		d\Pi_{\kappa_{d+1}-1/2}(u_{d+1}).
\end{align*}
Repeating this step with the remaining $d$ integrals and applying each time Lemma \ref{lem:FVII} with suitably chosen
parameters, one arrives exactly at the desired estimate \eqref{iden200}.
\end{proof}

%%%%%%%%%%%%%%%%%%%%%%%%%%%%%%%%%%%%%%%%%%%%%%%%%%%%%%%%%%%%%%%%%%
%%%%%%%%%%%%%%%%%%%%%%%%%%%%%%%%%%%%%%%%%%%%%%%%%%%%%%%%%%%%%%%%%%
%%%%%%%%%%%%%%%%%%%%%%%%%%%%%%%%%%%%%%%%%%%%%%%%%%%%%%%%%%%%%%%%%%

\end{document}